 \newtheorem{theorem}{Theorem}[section]
 \newtheorem{corollary}[theorem]{Corollary}
 \newtheorem{lemma}[theorem]{Lemma}
 \newtheorem{proposition}[theorem]{Proposition}
 \theoremstyle{definition}
 \newtheorem{definition}[theorem]{Definition}
 \theoremstyle{remark}
 \newtheorem{example}[theorem]{Example}
 \numberwithin{equation}{section}
\begin{document}

%
\setlength{\parindent}{0cm}
\renewcommand{\labelenumi} {(\alph{enumi})}    
\renewcommand{\labelenumii}{(\roman{enumii})}
\renewcommand{\theenumi} {(\alph{enumi})}      
\renewcommand{\theenumii}{(\roman{enumii})}    
  \renewcommand {\Re}{\text{Re}}
  \renewcommand {\Im}{\text{Im}}
  \newcommand   {\upi}{{\mathrm \pi}}
  \newcommand   {\ud} {{\mathrm d}}
  \newcommand   {\dr}{{\mathrm d}r}
  \newcommand   {\ds}{{\mathrm d}s}
  \newcommand   {\dt}{{\mathrm d}t}
  \newcommand   {\dx}{{\mathrm d}x}
  \newcommand   {\dxi}{{\mathrm d}\xi}
  \newcommand   {\dy}{{\mathrm d}y}
  \newcommand   {\dz}{{\mathrm d}z}
  \newcommand*{\longhookrightarrow}{\ensuremath{\lhook\joinrel\relbar\joinrel\relbar\joinrel\rightarrow}}

  \newcommand {\pihalbe}{\nicefrac{\upi}{2}}
  \newcommand {\einhalb}{{\nicefrac{1}{2}}}
  \newcommand {\CC}{\mathbb C}  
  \newcommand {\RR}{\mathbb R}
  \newcommand {\KK}{\mathbb K}  
  \newcommand {\NN}{\mathbb N}
  \newcommand {\ZZ}{\mathbb Z}

  \newcommand {\BOUNDED}{\mathcal B}
  \newcommand {\DOMAIN}{\mathscr D}
  \newcommand {\eins} {\mathbbm 1}
  \newcommand {\al}{\alpha}
  \newcommand {\la}{\lambda}
  \newcommand {\eps}{\varepsilon}
  \newcommand {\Ga}{\Gamma}
  \newcommand {\ga}{\gamma}
  \newcommand {\om}{\omega}
  \newcommand {\Om}{\Omega}
  \newcommand {\norm}[1] {\| #1 \|}  
  \newcommand {\abs}[1] {|#1|}
  \newcommand {\biggabs}[1] {\bigg|#1\bigg|}
  \newcommand {\lrnorm}[1]{\left\| #1 \right\|}
  \newcommand {\bignorm}[1]{\bigl\| #1 \bigr\|}
  \newcommand {\Bignorm}[1]{\Bigl\| #1 \Bigr\|}
  \newcommand {\Biggnorm}[1]{\Biggl\| #1 \Biggr\|}
  \newcommand {\biggnorm}[1]{\biggl\| #1 \biggr\|}
  \newcommand {\Bigidual}[3] {\Bigl\langle #1, #2 \Bigr\rangle_{#3}}
  \newcommand {\bigidual}[3] {\bigl\langle #1, #2 \bigr\rangle_{#3}}
  \newcommand {\idual}[3] {\langle #1, #2 \rangle_{#3}}
  \newcommand {\Bigdual}[2] {\Bigidual{#1}{#2}{}}
  \newcommand {\bigdual}[2] {\bigidual{#1}{#2}{}}
  \newcommand {\dual}[2] {\idual{#1}{#2}{} }
  \newcommand {\sector}[1] {S_{#1}}

  \newcommand {\SUCHTHAT}{:\;}
  \newcommand {\embeds} {\hookrightarrow}

  \newcommand {\ui}{\text{\rm i}}  
  \newcommand {\SCHWARTZ} {\mathscr S}
  \newcommand {\FOURIER} {\mathscr F}
  \newcommand {\dist}{\text{dist}}
  \newcommand {\SECTOR}[1]{S_{#1}} 
  \newcommand {\calA}{{\mathcal A}}
  \newcommand {\calL}{{\mathcal L}}
  \newcommand {\calQ}{{\mathcal Q}}
  \newcommand {\calS}{{\mathcal S}}
  \def\aDeForme{\mathop{\mbox{{\LARGE $\mathfrak{a}$}}}} 
  \newcommandx*\form[3][2=\cdot,3=\cdot]{ \aDeForme(#1; #2, #3)}
  \newcommandx*\sg[3][2=A,3=t]{e^{-{#1}\cdot {#2}(#3)}}
  \newcommand {\sprod}[2] {\left[#1\,|\,#2\right]_H}
  \newcommand {\dprod}[2] {\left\langle #1, #2 \right\rangle}
  \newcommand {\bracket}[1]{\langle{#1}\rangle}
   \newcommand {\supp}{\text{supp}}

\textheight=1.02\textheight  

\allowdisplaybreaks

\title[Maximal regularity for non-autonomous evolution equations ]{Maximal regularity for non-autonomous evolution equations }
\author[Bernhard H. Haak]{ Bernhard H. Haak}
\address{%
Institut de Math\'ematiques de Bordeaux, CNRS UMR 5251 \\Univ.  Bordeaux \\351, cours de
la Lib\'eration\\33405 Talence CEDEX\\FRANCE}
\email{bernhard.haak@math.u-bordeaux.fr, elmaati.ouhabaz@math.u-bordeaux.fr}
\author[El Maati Ouhabaz]{El Maati Ouhabaz}
\subjclass{35K90, 35K50, 35K45, 47D06}
\keywords{Maximal regularity, sesquilinear forms, non-autonomous evolution equations,  pseudo-differential operators.}
\thanks{The research of both authors was partially supported by the ANR
project HAB, ANR-12-BS01-0013-02}

\begin{abstract}
 We consider the maximal regularity problem for non-autonomous evolution equations
 \begin{equation}
 \left\{
  \begin{array}{rcl}
     u'(t) + A(t)\,u(t) &=& f(t), \ t \in (0, \tau] \\
     u(0)&=&u_0.
  \end{array}
\right.
\end{equation}
Each operator $A(t)$ is associated with a sesquilinear form $\form{t}$
on a Hilbert space $H$.  We assume that these forms all have the same
domain and satisfy some regularity assumption with respect to $t$
(e.g., piecewise $\alpha$-H\"older continuous for some $\alpha >
\einhalb$).  We prove maximal $L_p$--regularity for all $u_0 $ in the
real-interpolation space $(H, \DOMAIN(A(0)))_{1-\nicefrac{1}{p},p}$. The
particular case where $p = 2$ improves previously known results and 
gives a positive answer to a question  of
J.L. Lions \cite{Lions:book-PDE} on the set of allowed initial data $u_0$. 
\end{abstract}

\maketitle

\section{Introduction and main results}
Let $H$ be a real or complex Hilbert space and let $V$ be another
Hilbert space with dense and continuous embedding $V \embeds H$.  We
denote by $V'$ the (anti-)dual of $V$ and by $\sprod{\cdot}{\cdot}$
the scalar product of $H$ and $\dprod{\cdot}{\cdot}$ the duality
pairing $V'\times V$.  The latter  satisfies (as usual)
$\dprod{v}{h} = \sprod{v}{h}$ whenever $v \in H$ and $h \in V$. By the
standard identification of $H$ with $H'$ we then obtain continuous and
dense embeddings $V \embeds H \eqsim H' \embeds V'$. We denote by
$\norm{.}_V$ and $\norm{.}_H$ the norms of $V$ and $H$, respectively.

\smallskip

We are concerned with the non-autonomous evolution equation
\begin{equation}\label{eq:evol-eq} \tag{P}
\left\{
  \begin{array}{rcl}
     u'(t) + A(t)\,u(t) &=& f(t), \ t \in (0, \tau] \\
     u(0)&=&u_0,
  \end{array}
\right.
\end{equation}
where each operator $A(t)$, $t \in [0, \tau]$,  is associated with a
sesquilinear form $ \form{t}$. 
Throughout this article we will assume that 
     \let\ALTLABELENUMI\labelenumi \let\ALTTHEENUMI\theenumi
     \renewcommand{\labelenumi}{[H\arabic{enumi}{]}}
     \renewcommand{\theenumi}{[H\arabic{enumi}{]}}
\begin{enumerate}
\item \label{item:constant-form-domain}
  (constant form domain) $\DOMAIN(\form{t}) = V$.
\item \label{item:uniform-continuity} (uniform boundedness) there
  exists $M>0$ such that for all $t \in [0, \tau]$ and $u, v \in V$,
  we have $|\form{t}[u][v]|\le M \norm{u}_V \norm{v}_V$.
\item \label{item:uniform-accretivity} (uniform quasi-coercivity)
  there exist $\al>0$, $\delta \in \RR$ such that for all $t \in [0,
  \tau]$ and all $u, v \in V$ we have $\al \norm{u}_V^2 \le \Re
  \form{t}[u][u] + \delta \norm{u}_H^2$.
\end{enumerate}
     \let\labelenumi\ALTLABELENUMI
     \let\theenumi\ALTTHEENUMI

     \smallskip Recall that $u \in H$ is in the domain $\DOMAIN(A(t))$
     if there exists $h\in H$ such that for all $v \in V$:
     $\form{t}[u][v] = \sprod{h}{v}$.  We then set $A(t)u := h$.  We
     mention that equality of the form domains, i.e.,
     $\DOMAIN(\form{t}) = V$ for $t \in [0, \tau]$ does not imply
     equality of the domains $\DOMAIN(A(t))$ of the corresponding
     operators. For each fixed $u \in V$, $\phi := \form{t}[u]$
     defines a continuous (anti-)linear functional on $V$, i.e.  $\phi
     \in V'$, then it induces a linear operator $\calA(t): V \to V'$
     such that $\form{t}[u][v] = \dprod{\calA(t)u}{v} $ for all $u, v
     \in V$. Observe that for $u \in V$,
\[
   \norm{ \calA(t)u }_{V'} 
    = \sup_{v \in V, \norm{v}_V=1} |\dprod{\calA(t)u}{v}| 
    =  \sup_{v \in V, \norm{v}_V=1} |\form{t}[u][v]| \le M \norm{u}_V
\]
so that $\calA(t) \in \BOUNDED(V, V')$. The operator $\calA(t)$ can be
seen as an unbounded operator on $V'$ with domain $V$ for all $t \in
[0, \tau]$.  The operator $A(t)$ is then the part of $\calA(t)$ on
$H$, that is, 
\[
   \DOMAIN(A(t)) = \{ u \in V,\  \calA(t) u \in H \}, \qquad A(t) u = \calA(t) u.
\]
It is a known fact that $-A(t)$ and $-\calA(t)$ both generate
holomorphic semigroups $(e^{-s \, A(t)})_{s \ge 0}$ and $(e^{-s\, \calA(t)})_{s \ge 0}$ on $H$ and $V'$, respectively. For each
$s \ge 0$, $e^{-s \, A(t)}$ is the restriction of $e^{-s \, \calA(t)}$ to $H$.  For all this, we refer to Ouhabaz
\cite[Chapter~1]{Ouhabaz:book}.

\medskip

The notion of maximal $L_p$--regularity for the above Cauchy problem
is defined as follows:
\begin{definition}\label{def:max-reg}
  Fix $u_0$.  We say that (\ref{eq:evol-eq}) has maximal
  $L_p$--regularity (in $H$) if for each $f \in L_p(0,\tau; H)$ there
  exists a unique $u \in W^{1}_p(0, \tau; H)$, such that $u(t) \in
  \DOMAIN(A(t))$ for almost all $t$, which satisfies
  (\ref{eq:evol-eq}) in the $L_p$--sense. Here $W^{1}_p(0, \tau; H)$
  denotes the classical $L_p$--Sobolev space of order one of functions
  defined on $(0, \tau)$ with values in $H$.
 \end{definition}

 \noindent Maximal regularity of an evolution equation on a Banach
 space $E$ depends on the operators involved in the equation, the
 space $E$ and the initial data $u_0$. The initial data has to be in
 an appropriate space. In the autonomous case, i.e., $A(t) = A$ for
 all $t \in [0, \tau]$, maximal $L_p$--regularity is well understood
 and it is also known that $u_0$ has to be in the real-interpolation
 space $(E, \DOMAIN(A))_{1-\nicefrac1{p}, p}$, see
 \cite{CannarsaVespri}.  We refer the reader further to the survey of
 Denk, Hieber and Pr\"uss \cite{DHP} and the references given therein.
 Note also that maximal regularity turns to be an important tool to
 study quasi-linear equations, see e.g.  the monograph of Amann
 \cite{Amann:1}.

 \noindent For the non-autonomous case we consider here, we first
 recall that if the evolution equation is considered in $V'$, then
 Lions proved maximal $L_2$--regularity for all initial data $u_0 \in
 H$, see e.g. \cite{Lions:book-PDE}, \cite[page 112]{Showalter}. This
 powerful result means that for every $u_0 \in H$ and $f \in L_2(0,
 \tau; V')$, the equation
\begin{equation}  \label{eq:lions-pb} \tag{P'}
  \left\{
  \begin{array}{rcl}
     u'(t) + \calA(t)\,u(t) &=& f(t) \\
     u(0)&=&u_0
  \end{array}\right.
\end{equation}
has a unique solution $u \in W^{1}_2(0,\tau ; V') \cap L_2(0, \tau
;V)$.  Note that this implies the continuity of $u(\cdot)$ as an
$H$--valued function, see \cite[XVIII Chapter 3, p.\ 513]{DL92}.  It
is a remarkable fact that Lions's theorem does not require any
regularity assumption (with respect to $t$) on the sesquilinear forms
apart from measurability. The apparently additional information $u \in
L_2(0, \tau ; V)$ follows from maximal regularity and the equation as
follows: for almost all $t$, $u(t) \in V$. For these $t \in (0, \tau)$
\begin{align*}
 \Re \form{t}[u(t)][u(t)]   & = \; -  \Re\dprod{u'(t)}{u(t)} + \Re \dprod{f(t)}{u(t)}\\
   &  \le \; \norm{u(t)}_V \norm{u'(t)}_{V'} +  \norm{u(t)}_V  \norm{f(t)}_{V'}.
\end{align*}
Suppose now that  the forms are coercive (i.e., $\delta = 0$ in \ref{item:uniform-accretivity}). Then  for some constant $c
> 0$ independent of $t$,
\begin{equation}  \label{eq:double-etoile}
      \norm{u(t)}_V^2 \le c\left[  \norm{u'(t)}_{V'}^2 +  \norm{f(t)}_{V'}^2 \right].
\end{equation}
Therefore, $u \in L_2(0, \tau; V)$ whenever $u \in W^{1}_2(0,\tau;
V')$ and $f \in L_2(0,\tau; V')$. If the forms are merely
quasi-coercive, we note that if $u(t)$ is the solution of
(\ref{eq:lions-pb}) then $u(t)e^{-\delta t}$ is the solution of the
same problem with $\calA (t) + \delta$ instead of $\calA(t)$ and
$f(t)e^{-\delta t}$ instead of $f(t)$. We apply now the previous
estimate \eqref{eq:double-etoile} to $u(t)e^{-\delta t}$ and $f(t)
e^{-\delta t}$ and obtain
\[
    \norm{u(t)}_V^2 \le c\left[  \norm{u'(t)}_{V'}^2  + \norm{u(t)}_{V'}^2 +  \norm{f(t)}_{V'}^2 \right]
\]
for some constant $c'$ independent of $t$.
\medskip

\noindent Note however that maximal regularity in $V'$ is not
satisfactory in applications to elliptic boundary value problems. For
example, in order to identify the boundary condition one has to
consider the evolution equation in $H$ rather than in $V'$.  For
symmetric forms (equivalently, self-adjoint operators $A(t)$) and $u_0
= 0$, Lions \cite[p. 65]{Lions:book-PDE}, proved maximal
$L_2$--regularity in $H$ under the additional assumption that $t
\mapsto \form{t}[u][v]$ is $C^1$ on $[0,\tau]$.  For $u(0) = u_0 \in
\DOMAIN(A(0))$ Lions \cite[p. 95]{Lions:book-PDE} proved maximal
$L_2$--regularity in $H$ for \eqref{eq:evol-eq} provided $t \mapsto
\form{t}[u][v]$ is $C^2$.  If the forms are symmetric and $C^1$ with
respect to $t$, Lions proved maximal $L_2$-regularity for $u(0) = u_0
\in V$ (one has to combine \cite[Theorem~1.1, p.~129 and Theorem~5.1,
p.~138]{Lions:book-PDE} to see this).  He asked the following
problems.

\medskip

\noindent{\bf Problem 1}: Does the maximal $L_2$--regularity in $H$
hold for \eqref{eq:evol-eq} with $u_0 = 0$ when $t \mapsto
\form{t}[u][v]$ is continuous or even merely measurable ?

\medskip
\noindent{\bf Problem 2}: For $u(0) = u_0 \in \DOMAIN(A(0))$, does the 
maximal $L_2$--regularity in $H$ hold under the weaker assumption that $t
\mapsto \form{t}[u][v]$ is $C^1$ rather than $C^2$ ?

\medskip 

\noindent The problem 1 is still open although some progress has
been made.  We mention here Ouhabaz and Spina \cite{OuhabazSpina} who
prove maximal $L_p$--regularity for \eqref{eq:evol-eq} when $u(0) = 0$
and $t \mapsto \form{t}[u][v]$ is $\alpha$-H\"older continuous for
some $\alpha > \frac{1}{2}$. More recently, Arendt et
al. \cite{ArendtDierLaasriOuhabaz} prove maximal $L_2$--regularity in
$H$ for
\[
\left\{
  \begin{array}{rcl}
     B(t)u'(t) + A(t)\,u(t) &=& f(t), \ t \in (0, \tau] \\
     u(0)&=&0 
  \end{array}
\right.
\]
in the case where $t \mapsto \form{t}[u][v]$ is piecewise Lipschitz
and $B(t)$ are bounded measurable operators satisfying $\gamma
\norm{v}_H^2 \le \Re \sprod{B(t)v}{v} \le \gamma' \norm{v}_H^2$ for some
positive constants $\gamma$ and $\gamma'$ and all $v \in H$. The
multiplicative perturbation by $B(t)$ was motivated there by
applications to some quasi-linear evolution equations.

\noindent 
Concerning the problem with $u_0 \neq 0$ and forms which are not necessarily
symmetric, Bardos \cite{Bar71} gave a partial positive answer to Problem
2 in the sense that one can take the initial data $u_0$ in $V$ under
the assumptions that the domains of both $A(t)^{\einhalb}$ and
$A(t)^{*\einhalb}$ coincide with $V$ as spaces and topologically with
constants independent of $t$, and that ${\mathcal
  A}(\cdot)^{\einhalb}$ is continuously differentiable with values in
${\mathcal L}(V,V')$. Note however that the property
$\DOMAIN(A(t)^{\einhalb}) = \DOMAIN(A(t)^{*\einhalb})$ is not always
true;  this equality is equivalent to the Kato's square
root property: $\DOMAIN(A(t)^{\einhalb}) = V$. The result of
\cite{Bar71} was extended in Arendt et
al. \cite{ArendtDierLaasriOuhabaz} by including the multiplication
$B(t)$ above and also weakening the regularity of ${\mathcal A}
(\cdot)^{\einhalb}$ from continuously differentiable to piecewise
Lipschitz. As in \cite{Bar71}, it is also assumed in
\cite{ArendtDierLaasriOuhabaz} that the domains of $A(t)^{\einhalb}$
and $A(t)^{*\einhalb}$ coincide with $V$ as spaces and topologically
with constants independent of $t$.

\noindent We emphasize that the above results from
\cite{ArendtDierLaasriOuhabaz, Bar71, Lions:book-PDE} on maximal
$L_2$--regularity do not give any information on maximal
$L_p$--regularity when $p \neq 2$ since the techniques used there are
based on a representation lemma in  (pre-) Hilbert  spaces.

\medskip

\noindent In the present paper we prove maximal $L_p$--regularity for
\eqref{eq:evol-eq} for all $p \in (1, \infty)$.  We extend the results
mentioned above and give a complete treatment of the problem with
initial data $u_0 \not= 0$ even when the forms are not necessarily
symmetric.  In particular, we obtain a positive answer to Problem~2
under even more general assumptions.

Our main result is the following.
\begin{theorem}\label{thm:main}
  Suppose that the forms $(\form{t})_{0 \le t \le \tau}$ satisfy the
  standing hypotheses
  \ref{item:constant-form-domain}--~\ref{item:uniform-accretivity} and
  the regularity condition
  \begin{equation}
    \label{eq:dini-regularity-of-forms}
   \left| \form{t}[u][v] - \form{s}[u][v] \right| \le \omega(|t{-}s|) \, \norm{u}_V \norm{v}_V
  \end{equation}
  where $\om: [0, \tau] \to [0, \infty)$ is a non-decreasing function such that 
\begin{equation}  \label{eq:Dini-3-2-condition}
    \int_0^\tau \tfrac{\omega(t)}{t^{\nicefrac32}}\,\dt < \infty. 
\end{equation}
Then the Cauchy problem (\ref{eq:evol-eq}) with $u_0 = 0$ has
maximal $L_p$--regularity in $H$ for all $p \in (1, \infty)$. If in addition $\om$
satisfies the $p$--Dini condition
\begin{equation}  \label{eq:p-Dini}
     \int_0^\tau \left(\tfrac{\omega(t)}{t} \right)^p \,\dt <  \infty,
\end{equation}
then (\ref{eq:evol-eq}) has maximal $L_p$--regularity for all $u_0 \in (H,
\DOMAIN(A(0)))_{1- \nicefrac1{p}, p}$. Moreover  there exists a positive constant $C$
such that
\[ 
\norm{u}_{L_p(0, \tau; H)} + \norm{u'}_{L_p(0, \tau; H)} + \norm{ A(\cdot) u(\cdot)}_{L_p(0, \tau; H)} \le C\left[ \norm{f}_{L_p(0, \tau; H)} + \norm{u_0}_{(H, \DOMAIN(A(0)))_{1-\nicefrac1{p}, p}} \right].
\] 
\end{theorem}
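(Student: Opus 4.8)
The plan is to establish the a priori estimate
\[
 \norm{u}_{L_p(0,\tau;H)} + \norm{u'}_{L_p(0,\tau;H)} + \norm{A(\cdot)u(\cdot)}_{L_p(0,\tau;H)} \le C\bigl[\,\norm{f}_{L_p(0,\tau;H)} + \norm{u_0}_{(H,\DOMAIN(A(0)))_{1-\nicefrac1{p},p}}\,\bigr]
\]
for a solution $u$ of \eqref{eq:evol-eq}, and then to obtain existence by approximating the family $(\form{t})_{t}$ by families that are smooth in $t$ and satisfy \eqref{eq:dini-regularity-of-forms} with the same $\om$ (for instance by mollification in $t$): for such regularized forms the $V'$-realization has \emph{constant} domain $V$ and $C^1$ dependence on $t$, so the Cauchy problem admits a strong solution by classical non-autonomous parabolic theory, and one passes to the limit using the uniform a priori bound, Lions's theorem in $V'$ providing uniqueness. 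The a priori estimate itself rests on a \emph{freezing representation}: rewriting \eqref{eq:evol-eq} as $u' + A(\sigma)u = f + (\calA(\sigma)-\calA(t))u$ for a parameter $\sigma$, applying the variation-of-constants formula for the analytic semigroup $(e^{-r\calA(\sigma)})_{r\ge 0}$, and then specializing $\sigma = t$, one gets for a.e.\ $t$
\[
 A(t)u(t) = A(t)e^{-tA(t)}u_0 \;+\; \int_0^t A(t)e^{-(t-s)A(t)}f(s)\,\ds \;+\; \int_0^t \calA(t)e^{-(t-s)\calA(t)}\bigl(\calA(t)-\calA(s)\bigr)u(s)\,\ds ,
\]
the last two integrals being understood in $V'$ and using the uniform smoothing bounds $\norm{A(t)e^{-rA(t)}}_{\BOUNDED(H)}\lesssim r^{-1}$ and $\norm{\calA(t)e^{-r\calA(t)}}_{\BOUNDED(V',H)}\lesssim r^{-3/2}$, valid (up to an exponential factor absorbed by $\delta$) by \ref{item:constant-form-domain}--\ref{item:uniform-accretivity}. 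Showing that $u(t)\in\DOMAIN(A(t))$ for a.e.\ $t$ --- i.e.\ that the right-hand side genuinely takes values in $H$ --- is part of the argument.

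For the third term, \eqref{eq:dini-regularity-of-forms} gives the pointwise bound $(t-s)^{-3/2}\,\om(t-s)\,\norm{u(s)}_V$, and the scalar kernel $r\mapsto r^{-3/2}\om(r)$ belongs to $L_1(0,\tau)$ \emph{precisely} by \eqref{eq:Dini-3-2-condition}; hence Young's inequality bounds this term in $L_p(0,\tau;H)$ by $\kappa\,\bignorm{\,\norm{u(\cdot)}_V\,}_{L_p(0,\tau)}$ with $\kappa=\int_0^\tau r^{-3/2}\om(r)\,\dr$. From quasi-coercivity \ref{item:uniform-accretivity} and $\Re\form{s}[u(s)][u(s)]=\Re\sprod{A(s)u(s)}{u(s)}$ one has $\norm{u(s)}_V\le \eta\,\norm{A(s)u(s)}_H+C_\eta\,\norm{u(s)}_H$ for every $\eta>0$; taking $\eta$ small relative to $\kappa$ absorbs this term into $\norm{A(\cdot)u}_{L_p(0,\tau;H)}$ on the left, leaving a multiple of $\norm{u}_{L_p(0,\tau;H)}$, which is controlled by $\norm{f}_{L_p(0,\tau;H)}+\norm{u_0}_H$ through the elementary energy estimate $\norm{u(t)}_H\le e^{\delta t}\bigl(\norm{u_0}_H+\int_0^t\norm{f(s)}_H\,\ds\bigr)$ and Young's inequality. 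The first term is treated by comparison with $A(0)e^{-tA(0)}u_0$: the latter lies in $L_p(0,\tau;H)$ with norm $\lesssim\norm{u_0}_{(H,\DOMAIN(A(0)))_{1-\nicefrac1{p},p}}$ by the trace description of the real-interpolation space, and a resolvent comparison together with \eqref{eq:dini-regularity-of-forms} gives $\bignorm{\bigl(A(t)e^{-tA(t)}-A(0)e^{-tA(0)}\bigr)u_0}_H\lesssim \tfrac{\om(t)}{t}\,\norm{u_0}_H$, where $t\mapsto\om(t)/t$ lies in $L_p(0,\tau)$ \emph{exactly} by the $p$-Dini condition \eqref{eq:p-Dini}. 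When $u_0=0$ this term, and with it the hypothesis \eqref{eq:p-Dini}, disappear.

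The heart of the matter is the second term, namely the operator $\mathcal Rf(t):=\int_0^t A(t)e^{-(t-s)A(t)}f(s)\,\ds$, which has to be bounded on $L_p(0,\tau;H)$ for every $p\in(1,\infty)$. It is a singular-integral operator --- pseudo-differential in flavour --- with operator-valued kernel $K(t,s)=A(t)e^{-(t-s)A(t)}\eins_{\{0<s<t\}}\in\BOUNDED(H)$; for frozen $t$ the associated symbol $\xi\mapsto A(t)(i\xi+A(t))^{-1}$ is bounded uniformly in $t$, because \ref{item:uniform-continuity}--\ref{item:uniform-accretivity} make each $A(t)$ sectorial of angle $<\pi/2$ with uniform constants. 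Two facts must be checked. First, the $L_2$-boundedness of $\mathcal R$: this is obtained by a Hilbert-space argument --- freezing the operator and applying Plancherel for each frozen generator, the error again being controlled by the integrable kernel $r^{-3/2}\om(r)$ --- so this step uses only \eqref{eq:Dini-3-2-condition} and is precisely the case $p=2$ of the theorem. Second, the H\"ormander integral regularity of $K$ in both variables: the "semigroup-time" contribution to $K(t,s)-K(t',s)$ is the classical Calder\'on--Zygmund term $\lesssim |t-t'|\,|t-s|^{-2}$, while the "operator" contribution is estimated by interpolating $\norm{\calA(t)-\calA(t')}_{\BOUNDED(V,V')}\le\om(|t-t'|)$ against the $V'\to H$ smoothing, giving $\lesssim\om(|t-t'|)\,|t-s|^{-3/2}$; integrating in $t$ over $|t-t'|\le\tfrac12|t-s|$ reduces the H\"ormander condition to $\int_0^\tau\om(r)\,r^{-3/2}\,\dr<\infty$, i.e.\ once more to \eqref{eq:Dini-3-2-condition}. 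The operator-valued Calder\'on--Zygmund theorem --- no $R$-boundedness subtlety occurs, $H$ being a Hilbert space --- then upgrades the $L_2$-bound to an $L_p$-bound for all $p\in(1,\infty)$ via weak type $(1,1)$ and duality. I expect this second step to be the main obstacle: it is where the merely Dini-type regularity of $t\mapsto\calA(t)$, which need not be differentiable, is reconciled with the Calder\'on--Zygmund smoothness requirement, and it is exactly what dictates the exponent $3/2$ in \eqref{eq:Dini-3-2-condition}.

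Putting the three estimates together yields
\[
 \norm{A(\cdot)u}_{L_p(0,\tau;H)} \le C\bigl(\norm{u_0}_{(H,\DOMAIN(A(0)))_{1-\nicefrac1{p},p}}+\norm{f}_{L_p(0,\tau;H)}\bigr) + \tfrac12\,\norm{A(\cdot)u}_{L_p(0,\tau;H)} + C\,\norm{u}_{L_p(0,\tau;H)},
\]
and since $\norm{u}_{L_p(0,\tau;H)}\lesssim\norm{f}_{L_p(0,\tau;H)}+\norm{u_0}_H$, absorption gives the bound on $\norm{A(\cdot)u}_{L_p}$; then $\norm{u'}_{L_p(0,\tau;H)}=\norm{f-A(\cdot)u}_{L_p(0,\tau;H)}\le\norm{f}_{L_p}+\norm{A(\cdot)u}_{L_p}$ finishes the asserted inequality. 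The approximation scheme announced above (regularize the forms in $t$, solve for the regularized forms, invoke the uniform a priori bound and weak compactness, and identify the limit via Lions's uniqueness) then yields existence and uniqueness, hence maximal $L_p$-regularity; only \eqref{eq:Dini-3-2-condition} enters for $u_0=0$, whereas for general $u_0$ in the interpolation space one additionally uses \eqref{eq:p-Dini} in the first term.
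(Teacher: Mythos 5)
Your overall architecture coincides with the paper's: the Acquistapace--Terreni representation $A(t)u(t)=(Ru_0)(t)+(Lf)(t)+(Q\,A(\cdot)u(\cdot))(t)$, absorption of the commutator term via \eqref{eq:Dini-3-2-condition}, comparison of $A(t)e^{-tA(t)}u_0$ with $A(0)e^{-tA(0)}u_0$ using \eqref{eq:p-Dini} and the trace description of $(H,\DOMAIN(A(0)))_{1-\nicefrac1{p},p}$, and Calder\'on--Zygmund theory with the H\"ormander condition to pass from $L_2$ to $L_p$ for $Lf(t)=\int_0^t A(t)e^{-(t-s)A(t)}f(s)\,\ds$. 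Your absorption mechanism for the commutator term (Young's inequality $\norm{u}_V\le\eta\norm{A(s)u}_H+C_\eta\norm{u}_H$ with $\eta$ small) is a legitimate variant of the paper's device, which instead replaces $A(t)$ by $A(t)+\mu$ and uses $\norm{(\calA(s)+\mu)^{-1}}_{\BOUNDED(H)}\le\mu^{-1}$ to make $Q$ strictly contractive.

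The genuine gap sits exactly where you yourself locate ``the heart of the matter'': the $L_2(0,\tau;H)$-boundedness of $L$. ``Freezing the operator and applying Plancherel for each frozen generator'' does not yield a proof. Freezing at the output time $t$ returns $L$ itself; freezing at the input time $s$ produces the main term $\int_0^t A(s)e^{-(t-s)A(s)}f(s)\,\ds$, which is no longer a convolution (the generator varies with the integration variable), so Plancherel is unavailable and this operator is exactly as hard as $L$; freezing on a partition into intervals of length $h$ gives frozen pieces whose $L_2$-bounds do not sum, with an error kernel of size $\om(h)(t-s)^{-1}$, not integrable in $t-s$. Moreover the relevant kernel difference obeys $\norm{A(t)e^{-rA(t)}-A(s)e^{-rA(s)}}_{\BOUNDED(H)}\lesssim\om(|t-s|)\,r^{-1}$ (this is what the resolvent comparison of Corollary~\ref{cor:resolvent-diffs} gives), not the $\om(r)r^{-\nicefrac32}$ you invoke, so the ``integrable error'' you appeal to is not the one that actually appears; the same wrong power enters your H\"ormander estimate, though there the conclusion survives. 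The paper closes this gap by a substantial separate argument: it writes $L$ as a pseudo-differential operator with operator-valued symbol $\sigma(t,\xi)=A(t)(\ui\xi+A(t))^{-1}$ and proves an operator-valued Muramatu--Nagase theorem (Section~\ref{sec-pseudo}), splitting the symbol at scale $\bracket{\xi}^{-\delta}$ into a part mollified in $t$ and a remainder of size $\om(\bracket{\xi}^{-\delta})$ summed through a Littlewood--Paley decomposition under $\int_0^1\om(t)^2\,\tfrac{\dt}{t}<\infty$ (a condition that does follow from \eqref{eq:Dini-3-2-condition}). A secondary weakness: your existence scheme (mollify the forms in $t$, solve by ``classical non-autonomous parabolic theory'', pass to the limit) is circular or at least unproven as stated, since for non-symmetric $C^1$ forms the domains $\DOMAIN(A(t))$ still vary and maximal $L_p$-regularity in $H$ for the regularized problems is not classical; the paper avoids this by starting from the Lions solution in $V'$, which exists with no regularity in $t$, establishing the a priori estimate for $f\in C_c^\infty(0,\tau;H)$, and then approximating $f$.
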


In this theorem, $(H, \DOMAIN(A(0)))_{1-\nicefrac1{p}, p}$ denotes the
classical real-interpolation space, see \cite[Chapter
1.13]{Triebel:interpolation} or \cite[Proposition 6.2]{Lunardi:book}.

We wish to point out that  Y. Yamamoto\footnote{see ``Solutions in {$L^p$} of abstract parabolic equations in {H}ilbert
  spaces'', J. Math. Kyoto Univ. \textbf{33} (1993), no.~2, 299--314.} states a result  which resembles to our previous theorem  in the setting of operators satisfying the 
  so-called Acquistapace-Terreni conditions on the corresponding resolvents. 
  Unfortunately it is difficult to follow his proofs.\footnote{It seems to be difficult to understand the proof of Lemma 4.3 at the beginning of page 306, the end of the proof of Proposition 2 at page 307, as well as estimates after formula (5.5) at page 310 in the proof of Lemma 3.4.}
  \medskip
  
  We have the following corollaries.

\begin{corollary}\label{cor1}
  Under the assumptions of the previous theorem, the Cauchy problem
  (\ref{eq:evol-eq}) with $u_0 = 0$ has maximal $L_2$--regularity in
  $H$.  If in addition $\om$ satisfies
\begin{equation}  \label{eq:2-Dini}
     \int_0^\tau \left(\tfrac{\omega(t)}{t} \right)^2 \,\dt <  \infty,
\end{equation}
then (\ref{eq:evol-eq}) has maximal $L_2$--regularity for all $u_0 \in \DOMAIN((\delta + A(0))^\einhalb)$.
In addition, there exists a positive constant $C$
such that
\[
\norm{u}_{W^1_2(0, \tau; H)} + \norm{ A(\cdot) u(\cdot)}_{L_2(0, \tau; H)} \le C\left[ \norm{f}_{L_2(0, \tau; H)} + 
\norm{u_0}_{ \DOMAIN((\delta + A(0))^\einhalb)} \right].
\]

\end{corollary}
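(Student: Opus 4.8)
The plan is to deduce Corollary~\ref{cor1} from Theorem~\ref{thm:main} applied with $p=2$, the only genuine work being the identification of the relevant interpolation space. First I would note that the hypotheses of the corollary are literally those of the theorem: \ref{item:constant-form-domain}--\ref{item:uniform-accretivity} and \eqref{eq:dini-regularity-of-forms} are assumed, and \eqref{eq:Dini-3-2-condition} is part of the standing assumptions, so Theorem~\ref{thm:main} with $p=2$ already yields maximal $L_2$--regularity in $H$ for $u_0=0$ — the first assertion. Next, I would observe that the additional hypothesis \eqref{eq:2-Dini} is exactly the $p$--Dini condition \eqref{eq:p-Dini} for $p=2$; hence Theorem~\ref{thm:main} gives maximal $L_2$--regularity together with the quantitative estimate for every $u_0\in (H,\DOMAIN(A(0)))_{1/2,2}$. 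Since $\norm{u}_{W^1_2(0,\tau;H)}\le\norm{u}_{L_2(0,\tau;H)}+\norm{u'}_{L_2(0,\tau;H)}$, it then remains only to show that, with equivalent norms,
\[
   (H,\DOMAIN(A(0)))_{1/2,2}=\DOMAIN\bigl((\delta+A(0))^{\einhalb}\bigr).
\]

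To prove this identity I would set $B:=\delta+A(0)$, which has the same domain as $A(0)$, so that $(H,\DOMAIN(A(0)))_{1/2,2}=(H,\DOMAIN(B))_{1/2,2}$. By \ref{item:uniform-accretivity}, $B$ is the operator associated with the bounded coercive form $\form{0}+\delta\sprod{\cdot}{\cdot}$; in particular $\Re\sprod{Bu}{u}\ge\alpha\norm{u}_V^2\ge c\,\norm{u}_H^2$ (using $V\embeds H$), so $B$ is $m$--accretive and invertible, and $\DOMAIN(B^{\einhalb})$ carries the norm $\norm{B^{\einhalb}\cdot}_H$. I would then chain two classical facts. First, $(H,\DOMAIN(B))$ is a couple of Hilbert spaces, and for such couples the real interpolation space with exponent $2$ coincides (with equivalent, in fact equal, norms) with the complex interpolation space, so $(H,\DOMAIN(B))_{1/2,2}=[H,\DOMAIN(B)]_{1/2}$. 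Second, by Kato's theory of fractional powers of $m$--accretive operators one has $[H,\DOMAIN(B)]_{1/2}=\DOMAIN(B^{\einhalb})$. Putting these together gives the claimed identity, and inserting the resulting inequality $\norm{u_0}_{(H,\DOMAIN(A(0)))_{1/2,2}}\lesssim\norm{u_0}_{\DOMAIN((\delta+A(0))^{\einhalb})}$ into the estimate of Theorem~\ref{thm:main} produces the stated bound. (When $A(0)$ is self-adjoint, the interpolation identity is immediate from the spectral theorem.)

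The step I expect to be the main obstacle is the second half of the interpolation identity in the non-self-adjoint case. The inclusion $[H,\DOMAIN(B)]_{1/2}\embeds\DOMAIN(B^{\einhalb})$ is cheap, since Kato's inequality $\norm{B^{\einhalb}u}_H\le\norm{Bu}_H^{1/2}\norm{u}_H^{1/2}$ for $u\in\DOMAIN(B)$ already gives it; the reverse inclusion is the substantive point, and it relies genuinely on Kato's results rather than on a bounded $H^\infty$--functional calculus of $B$, which an $m$--sectorial operator need not possess. Everything else in the corollary is a direct translation of Theorem~\ref{thm:main} to the case $p=2$ and is purely bookkeeping.
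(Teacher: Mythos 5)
Your proposal is correct and takes essentially the same route as the paper: the paper's proof is exactly the reduction to Theorem~\ref{thm:main} with $p=2$ combined with the chain $(H,\DOMAIN(A(0)))_{\nicefrac12,2}=[H,\DOMAIN(A(0))]_{\einhalb}=\DOMAIN((\delta+A(0))^{\einhalb})$, for which it simply cites Lunardi's book, whereas you sketch the two links (real equals complex interpolation for Hilbert couples, and the square-root identification via Kato/BIP) yourself. The only inaccuracy is your closing aside that an $m$--sectorial operator need not possess a bounded $H^\infty$--calculus: on a Hilbert space every $m$--accretive operator does (by unitary dilation of the contraction semigroup, and the paper itself invokes this in the proof of Corollary~\ref{coro:main}); but nothing in your argument depends on that remark.
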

Obviously, if $A(0)$ is accretive then $A(0)^\einhalb$ is well defined and $ \DOMAIN((\delta + A(0))^\einhalb) =  \DOMAIN(A(0)^\einhalb)$. 

This corollary solves Problem 2 even under more general conditions
than conjectured by J.L. Lions%

\begin{corollary}\label{coro:main}
  Assume that additionally to the standing assumptions
  \ref{item:constant-form-domain}--~\ref{item:uniform-accretivity}
  that the form $\form{t}[\cdot][\cdot]$ is piecewise $\al$--Hölder
  continuous for some $\al > \einhalb$. That is, there exist $t_0 = 0
  < t_1< ...< t_k = \tau$ such that on each interval $(t_i, t_{i+1})$
  the form is the restriction of a $\al$--Hölder continuous form on
  $[t_i, t_{i+1}]$. Assume in addition that at the discontinuity points, we have $\DOMAIN((\delta + A(t_j^-))^{\einhalb}) = 
  \DOMAIN((\delta + A(t_j^+))^{\einhalb})$. Then the Cauchy problem (\ref{eq:evol-eq}) has
  maximal $L_2$--regularity for all $u_0 \in \DOMAIN((\delta + A(0))^\einhalb)$
  and there exists a positive constant $C$ such that
\[
\norm{u}_{W^1_2(0, \tau; H)}  + \norm{ A(\cdot) u(\cdot)}_{L_2(0, \tau; H)} \le C\left[ \norm{f}_{L_2(0, \tau; H)} + 
\norm{u_0}_{ \DOMAIN((\delta + A(0))^\einhalb)} \right].
\]
\end{corollary}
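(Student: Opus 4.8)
The plan is to deduce Corollary~\ref{coro:main} from Corollary~\ref{cor1} by a gluing argument over the finitely many intervals of regularity $[t_i,t_{i+1}]$, $i=0,\dots,k-1$.

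First I would note that $\al$--Hölder continuity is the special case of \eqref{eq:dini-regularity-of-forms} with $\om(t)=c\,t^\al$. Since $\al>\einhalb$ we have $\al-\nicefrac32>-1$ and $2\al-2>-1$, so both \eqref{eq:Dini-3-2-condition} and the $2$--Dini condition \eqref{eq:2-Dini} hold. Hence, on each $[t_i,t_{i+1}]$, after translating the interval to start at $0$ and replacing the forms by their $\al$--Hölder continuous extensions to the closed interval, Corollary~\ref{cor1} applies: for every $w\in\DOMAIN((\delta+A(t_i^+))^{\einhalb})$ and every right-hand side in $L_2(t_i,t_{i+1};H)$ there is a unique maximal $L_2$--regularity solution on $[t_i,t_{i+1}]$, together with the corresponding a priori estimate.

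Next I would run the induction. On $[t_0,t_1]$ solve \eqref{eq:evol-eq} with datum $u_0\in\DOMAIN((\delta+A(0))^{\einhalb})$, obtaining $u$ on $[t_0,t_1]$; the point is to control the trace $u(t_1)$. Here I would use that on a Hilbert space the real interpolation space $(H,\DOMAIN(B))_{\einhalb,2}$ coincides with the complex one $[H,\DOMAIN(B)]_{\einhalb}$, and that the latter equals $\DOMAIN(B^{\einhalb})$ whenever $B$ has a bounded $H^\infty$--calculus --- which is the case for $B=\delta+A(t_1^-)$ since it is associated with a bounded coercive form. (This is also the identity turning Theorem~\ref{thm:main} with $p=2$ into Corollary~\ref{cor1}.) Combined with the trace estimate contained in the proof of Theorem~\ref{thm:main} --- continuity of $t\mapsto u(t)$ into the pertinent interpolation space, with $\sup_t$ bounded by the data --- this yields $u(t_1)=u(t_1^-)\in\DOMAIN((\delta+A(t_1^-))^{\einhalb})$ and
\[
\norm{u(t_1)}_{\DOMAIN((\delta+A(t_1^-))^{\einhalb})}\le C\left(\norm{f}_{L_2(t_0,t_1;H)}+\norm{u_0}_{\DOMAIN((\delta+A(0))^{\einhalb})}\right).
\]
By the hypothesis $\DOMAIN((\delta+A(t_1^-))^{\einhalb})=\DOMAIN((\delta+A(t_1^+))^{\einhalb})$ the two graph norms are equivalent (closed graph theorem), so $u(t_1)$ is an admissible initial datum on $[t_1,t_2]$ with comparable norm. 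Iterating over $i=1,\dots,k-1$ and concatenating the pieces gives $u\in W^{1}_2(0,\tau;H)$ --- continuity at each $t_i$ being guaranteed since the one-sided values agree by construction --- with $u(t)\in\DOMAIN(A(t))$ a.e., $A(\cdot)u(\cdot)\in L_2(0,\tau;H)$, and \eqref{eq:evol-eq} satisfied in the $L_2$--sense. Uniqueness follows from uniqueness on each subinterval, and telescoping the finitely many a priori estimates (each $\norm{u(t_i)}$ being dominated by the data on $[0,t_i]$) produces the stated bound.

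The main obstacle is precisely this trace control: one must know that the maximal $L_2$--regularity solution on $[t_{i-1},t_i]$ takes, at the right endpoint, a value in $\DOMAIN((\delta+A(t_i^-))^{\einhalb})$ with norm controlled by the data. This is where the threshold $\al>\einhalb$ genuinely enters (beyond merely making the Dini integrals converge) and why the statement is phrased via square-root domains with the matching condition at the jump points; the remaining steps are bookkeeping with the Hilbert-space interpolation identity and the finite concatenation.
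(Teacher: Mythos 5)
Your overall architecture --- reduce to Corollary~\ref{cor1} on each interval of H\"older continuity (noting that $\om(t)=ct^{\al}$ with $\al>\einhalb$ gives both \eqref{eq:Dini-3-2-condition} and \eqref{eq:2-Dini}), then glue across the jump points using the assumed equality of the square-root domains --- is exactly the paper's. But the step you yourself flag as ``the main obstacle'', namely that the solution on $[t_{i-1},t_i]$ has trace $u(t_i^-)\in\DOMAIN\bigl((\delta+A(t_i^-))^{\einhalb}\bigr)$ with controlled norm, is not actually proved: you invoke ``the trace estimate contained in the proof of Theorem~\ref{thm:main}'', and no such estimate is there. The proof of Theorem~\ref{thm:main} yields only the a priori bound on $\norm{u}_{W^1_p}+\norm{A(\cdot)u(\cdot)}_{L_p}$. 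The standard trace theorem (continuity of $u$ into $(H,\DOMAIN(B))_{1-\nicefrac1p,p}$) applies to the maximal regularity space $W^1_p(0,T;H)\cap L_p(0,T;\DOMAIN(B))$ of a \emph{fixed} operator $B$; here the domains $\DOMAIN(A(t))$ vary with $t$ and are not assumed comparable to $\DOMAIN(A(t_i^-))$, so knowing $u(t)\in\DOMAIN(A(t))$ a.e.\ with $A(\cdot)u(\cdot)\in L_2$ does not place $u$ in $L_2(t_{i-1},t_i;\DOMAIN(A(t_i^-)))$ and gives no information about the endpoint value in $\DOMAIN((\delta+A(t_i^-))^{\einhalb})$. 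This is precisely the nontrivial content of the corollary, and it is where almost all of the paper's proof is spent.

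The paper fills this gap as follows. It works with the Lions solution $u$ in $V'$ and the representation formula \eqref{eq:formule-2} on $(t_{i-1},t_i]$, which splits $u(t_i)$ into three pieces. The semigroup term $e^{-(t_i-t_{i-1})\calA(t_i^-)}u(t_{i-1})$ lies in $\DOMAIN(A(t_i^-)^{\einhalb})$ by analyticity. The forcing term $\int e^{-(t_i-r)\calA(t_i^-)}f(r)\,\dr$ is handled by the square-function estimates \eqref{eq:square-function} coming from the bounded $H^\infty$-calculus of the accretive operator $A(t_i^-)$, followed by a weak-compactness argument in the Hilbert space $\DOMAIN(A(t_i^-)^{\einhalb})$. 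The commutator term $\int e^{-(t_i-s)\calA(t_i^-)}(\calA(t_i^-)-\calA(s))u(s)\,\ds$ is estimated by pairing against $h\in H$, using Proposition~\ref{prop:sg-extrapolation}~\ref{item:standard-1} to get $\bignorm{{A(t_i^-)^*}^{\einhalb}e^{-(t_i-s)A(t_i^-)^*}h}_V\le C|t_i-s|^{-1}$, so that the H\"older bound $|t_i-s|^{\al}$ produces the integrable singularity $|t_i-s|^{\al-1}$ against $\norm{u(\cdot)}_V\in L_2$. None of these ingredients is a formal consequence of the interpolation identity $(H,\DOMAIN(B))_{\einhalb,2}=\DOMAIN((\delta+B)^{\einhalb})$ that you cite; to complete your argument you would need to supply this analysis (or an equivalent substitute) explicitly.
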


In this corollary, $A(t_j^-)$ is the operator associated with the extension of the form at the left of point $t_j$. Similarly for 
$A(t_j^+)$. 

We mention that  Fuje and Tanabe \cite{FT73} constructed an evolution family associated with the non-autonomous problem considered here  when the 
form $\form{t}[\cdot][\cdot]$ is  $\al$--Hölder
  continuous for some $\al > \einhalb$. This is of independent interest but  it is not clear if one  obtains maximal regularity from 
  any property of  the corresponding evolution family. 

Now we explain briefly the strategy of the proof. A starting point is
a representation formula for the solution $u$ (recall that $u$ exists
in $V'$ by Lions theorem), which already appeared in the work of
Acquistapace and Terreni \cite{AcquistapaceTerreni}, namely
 \begin{equation}    \label{eq:AT00}
    \begin{split}
              u(t) = & \;  \int_0^t  e^{-(t{-}s)  \calA(t)} (\calA(t){-}\calA(s)) u(s)\,\ds \\ 
                     & \quad + \int_0^t e^{-(t{-}s) \calA(t) } f(s)\,\ds  + e^{-t \,\calA(t)}u_0.
    \end{split}
  \end{equation}
This allows us to  write $\calA(t) u(t) = (Q \calA(\cdot)u(\cdot))(t) + (L f)(t) + (Ru_0)(t)$,  
where
\begin{align*}
(Q g)(t) := & \; \int_0^t  \calA(t) e^{-(t{-}s) \calA(t)} (\calA(t) - \calA(s)) \, \calA(s)^{-1}g(s)\,\ds\\
(L g )(t) :=& \;  \calA(t) \int_0^t  e^{-(t{-}s) \calA(t)} g(s)\,\ds 
  \quad \text{and} \quad  
  (R u_0)(t) := \calA(t) e^{-t \, \calA(t)} u_0.
\end{align*}
Condition \eqref{eq:Dini-3-2-condition} allows us to prove
invertibility of the operator $(I{-}Q)$ on $L_p(0, \tau; H)$. The
operator $L$ is seen as a pseudo-differential operator with an
operator-valued symbol. We prove an $L_2$-boundedness result for such
operators in Section \ref{sec-pseudo}, see
Theorem~\ref{thmpseudo}. For operators with scalar-valued symbols,
this result is due to Muramatu and Nagase \cite{MuNa81}. We adapt
their arguments to our setting of operator-valued symbols.  This
theorem is then used to prove $L_2(0, \tau ; H)$-boundedness of
$L$. In order to extend $L$ to a bounded operator on $L_p(0, \tau ;
H)$, for $p \in (1, \infty)$, we look at $L$ as a singular integral
operator with an operator-valued kernel. We show that both $L$ and its
adjoint $L^*$ satisfy the well known Hörmander integral
condition. Finally, we treat the operator $R$ by taking the difference
with $\calA(0) e^{-t \,\calA(0)} u_0$ and using the functional calculus
for accretive operators on Hilbert spaces. In order to handle this
difference we use \eqref{eq:p-Dini}, the remaining term, $t \mapsto
A(0) e^{-t \, A(0)}u_0$ is then in $L_p(0, \tau; H)$ if and only if
$u_0 \in ( H, \DOMAIN(A(0)))_{1-\nicefrac{1}{p},p}$.
  
Although most of the arguments outlined here use heavily the fact that
$H$ is a Hilbert space, the strategy justifies some hope that the
results extend to other situations such as $L_q$--spaces. One would
then hope to prove $L_p(L_q)$ a priori estimates for parabolic
equations with time dependent coefficients.  In the last section of
this paper we give some applications and prove $L_p(L_2)$--a priori
estimates.

\bigskip

Maximal regularity may fail even for ordinary differential equations,
letting $H = \RR$.  We illustrate this by an example which is essentially   taken  from 
Batty, Chill and Srivastava \cite{BattyChillSrivastava}.

\begin{example}
  Consider $\varphi(t) = |t|^{-\nicefrac{1}p}$. Then $\varphi$ in $L_{q,
    \text{loc}}(\RR)$ for $1\le q < p$ but $\varphi \not\in
  L_p([0,\eps])$ for $\eps>0$. Chose a dense sequence $(t_n)$ of $[0,1]$ and a
  positive, summable sequence $(c_n)$. Define $a(t) := 1+ \sum c_n
  \varphi(t-t_n)$.  Then $a \in L_q([0,1])$ for $1 \le q < p$ but $a \not
  \in L_p(I)$ for any interval $I \subset [0,1]$. Consider the
  non-autonomous equation
  \begin{equation}    \label{eq:counterex-BCS}
\left\{
  \begin{array}{rcl}
x'(t) + a(t) x(t) &=& 1\\
x(0)&=&0
  \end{array}\right.
  \end{equation}
Then by variation of constants formula, $x(t) = \int_0^t \exp\bigl(-\int_s^t
a(r)\,\dr\bigr) \,\ds$. Since $a(r) \ge 0$,
\begin{align*}
|a(t) x(t)| = & \; a(t)  \int_0^t \exp\Bigl(- \int_s^t a(r)\,\dr\Bigr) \,\ds\\
          \ge & \; a(t)  \int_0^t \exp\Bigl(- \int_0^1 a(r)\,\dr \Bigr) \,\ds
           =   \; C t\, \, a(t).  
\end{align*}
Therefore, for $0<\alpha< \beta \le 1$ we have $|a(t)x(t)| \ge \alpha C\; a(t)$ on
$[\alpha, \beta]$ which implies that (\ref{eq:counterex-BCS}) cannot have
maximal $L_p$--regularity. 

On the other hand, if we replace the constant function $1$ by 
$f$ we obtain
\begin{align*}
   |a(t) x(t)| 
=   & \; a(t) | \int_0^t  f(s) \exp\Bigl(- \int_s^t a(r)\,\dr\Bigr)| \,\ds\\
\le & \; a(t) \int_0^t  | f(s) | \, \ds  \le C a(t) \| f\|_{L_q}
\end{align*}
on $[0,1]$ and this shows that (\ref{eq:counterex-BCS}) has maximal
$L_q$-regularity for $q<p$. 

Notice however, that letting $p{=}2$ this
example is not a counterexample to the questions we raise, since our
standing hypothesis \ref{item:uniform-continuity} is not satisfied
here.

\noindent Observe also that the operators in this example are all bounded and
commute. Thus, these last two properties are not enough to obtain
maximal $L_2$-regularity for non-autonomous evolution equations.
\end{example}

\section{Preparatory lemmas}
In this section we prove most of the main arguments which we will need
for the proofs of our  results.  The only missing argument here
concerns boundedness of pseudo-differential operators with
operator-valued symbols which we write in a separate section for
clarity of exposition.  We formulate our arguments in a series of
lemmata.

 Throughout this section we will suppose that
~\ref{item:constant-form-domain}--~\ref{item:uniform-accretivity} are
satisfied. Let $\mu \in \RR$ and set $v(t) := e^{-\mu t} u(t)$. If $u$
satisfies \eqref{eq:evol-eq}, then $v$ satisfies the evolution equation
\[
\left\{
  \begin{array}{rcl}
     v'(t) + (\mu + A(t))\,v(t) &=& f(t)e^{-\mu t} \\
     u(0)&=&u_0. 
  \end{array}
\right.
\]
In addition, $v \in W^{1}_p(0, \tau; H)$ {\it if and only if} $u \in
W^{1}_p(0, \tau; H)$. This shows that we may replace $\calA(t)$
(resp. $A(t)$) by $\calA(t) + \mu$ (resp. $A(t) + \mu$). Therefore, we
may suppose without loss of generality that $\delta{=}0$ in
\ref{item:uniform-accretivity}. In particular, we may suppose that
$A(t)$ and $\calA(t)$ are boundedly invertible by choosing $\mu > 0$
large enough.  We will do so in the sequel without further mentioning
it.

It is known that ${-}A(t)$ generates a bounded holomorphic semigroup
on $H$.  The same is true for ${-}\calA(t)$ on $V'$. We write this
explicitly in the next proposition in order to point out that the
constants involved in the estimates are uniform with respect to
$t$. The arguments are standard and can be found e.g. in \cite[Section
1.4]{Ouhabaz:book}.  Denote by $S_\theta$ the open sector
$\sector{\theta} = \{ z \in \CC^* \SUCHTHAT |\text{arg}(z) < \theta \}$
with vertex $0$.

\begin{proposition}\label{prop:sg-extrapolation} 
  For any $t \in [0, \tau]$, the operators $-A(t)$ and $-\calA(t)$
  generate strongly continuous analytic semigroups of angle
  $\pihalbe{-}\arctan(\nicefrac{M}\al)$ on $H$ and $V'$, respectively. In
  addition, there exist constants $C$ and $C_\theta$, independent of
  $t$, such that
  \begin{enumerate}
  \item\label{item:rien} 
     $\displaystyle   \norm{ e^{-z \, A(t) } }_{\BOUNDED(H)}  \le 1\ 
      \text{and}\ 
      \norm{ e^{-z \,  \calA(t) } }_{\BOUNDED(V')}  \le C \quad\text{for all}\quad z \in \sector{\pihalbe{-}\arctan(\nicefrac{M}{\alpha})}$,
  \item \label{item:standard-0}
     $\displaystyle   \norm{ A(t) e^{-s \,  A(t) } }_{\BOUNDED(H)}  \le \tfrac{C}{s}
        \quad\text{and}\quad
       \norm{ \calA(t) e^{-s \,  \calA(t) } }_{\BOUNDED(V')}  \le \tfrac{C}{s}$, 
  \item \label{item:standard-1} $\norm{  e^{-s \,  A(t)} x}_V \le \tfrac{C}{\sqrt{s}} \norm{x}_H \quad\text{and}\quad 
  \norm{  e^{-s \,  \calA(t)} \phi}_H \le \tfrac{C}{\sqrt{s}} \norm{\phi}_{V'}$, 
    \item \label{item:resol}
      $\norm{ ( z - A(t) )^{-1} x}_V \le \tfrac{C_\theta}{\sqrt{|z|}} \norm{x}_H \ \text{for}\  z \notin \sector{\theta}
         \quad  \text{and  fixed }\quad \theta > \arctan(\nicefrac{M}{\alpha}).$
   \item\label{item:tout}
  All the previous estimates hold for $A(t) + \mu$ with constants independent of $\mu$ for  $\mu \ge 0$.
  \end{enumerate}
\end{proposition}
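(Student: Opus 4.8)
The plan is to reduce everything to standard estimates for sectorial operators on a Hilbert space, with careful attention to the uniformity of constants in $t$. First I would fix $t$ and observe that hypotheses \ref{item:uniform-continuity}--\ref{item:uniform-accretivity} say precisely that the numerical range of $\calA(t)$ (viewed as an operator $V \to V'$) sits in the sector $\overline{\sector{\arctan(M/\alpha)}}$: indeed for $u \in V$ with $\norm{u}_V = 1$ we have $\Re\dprod{\calA(t)u}{u} \ge \alpha$ (using $\delta = 0$) and $|\Im\dprod{\calA(t)u}{u}| \le |\dprod{\calA(t)u}{u}| \le M$, so $|\arg\dprod{\calA(t)u}{u}| \le \arctan(M/\alpha)$. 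Together with the bound $\norm{\calA(t)}_{\BOUNDED(V,V')} \le M$, this is the classical setup in which $-\calA(t)$ generates a bounded holomorphic $C_0$-semigroup on $V'$ of angle $\pihalbe - \arctan(M/\alpha)$, and the same passes to its part $A(t)$ on $H$; the constants in the Cauchy integral representation of $e^{-z\calA(t)}$ and of the resolvent depend only on $M$, $\alpha$ and the aperture $\theta$, hence are uniform in $t$. For (a), the contractivity $\norm{e^{-zA(t)}}_{\BOUNDED(H)} \le 1$ in the sector is exactly the Lumer--Phillips/numerical-range argument for the self-map property of the form semigroup, as in \cite[Section~1.4]{Ouhabaz:book}, and the $\BOUNDED(V')$-bound $\le C$ for $e^{-z\calA(t)}$ follows from the resolvent estimate and a Cauchy integral on a sectorial contour.

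For (b), the estimate $\norm{A(t)e^{-sA(t)}}_{\BOUNDED(H)} \le C/s$ is the usual analyticity estimate obtained by differentiating the Cauchy integral $e^{-sA(t)} = \frac{1}{2\pi\ui}\int_\Gamma e^{-s z}(z-A(t))^{-1}\,\dz$ over a sectorial path $\Gamma$ and using the resolvent bound $\norm{(z-A(t))^{-1}}_{\BOUNDED(H)} \le C/|z|$ off the sector; the same argument in $V'$ gives the companion bound. For (c), I would use the quasi-coercivity to control the $V$-norm: writing $x_s := e^{-sA(t)}x$, one has $\alpha\norm{x_s}_V^2 \le \Re\form{t}[x_s][x_s] = \Re\sprod{A(t)x_s}{x_s} \le \norm{A(t)e^{-sA(t)}x}_H\,\norm{x_s}_H \le (C/s)\norm{x}_H^2$ by (b) and the contractivity (a), which yields $\norm{e^{-sA(t)}x}_V \le (C/\sqrt{s})\norm{x}_H$; the $V'\to H$ statement is the dual assertion, obtained either by duality (the form of $\calA(t)^*$ satisfies the same hypotheses) or by the identical computation applied to $\calA(t)$ on $V'$ using $\langle\cdot,\cdot\rangle_{V'\times V}$. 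For (d), the resolvent version, I would either repeat the coercivity argument with $(z-A(t))^{-1}x$ in place of $e^{-sA(t)}x$ — using $\Re\form{t}[y][y] = \Re\sprod{A(t)y}{y} = \Re\sprod{(A(t)-z)y}{y} + \Re z\,\norm{y}_H^2$ and the numerical-range geometry to absorb terms — or deduce it by Laplace transform from (c). Finally (e) is immediate: replacing $A(t)$ by $A(t) + \mu$ with $\mu \ge 0$ only shifts $\delta$ to $\delta - \mu \le 0$, so \ref{item:uniform-accretivity} still holds with the same $\alpha$ (in fact the coercivity improves), and all constants above depend only on $M$, $\alpha$, $\theta$, not on $\mu$.

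The only point requiring genuine care — the ``main obstacle'', though it is more bookkeeping than difficulty — is tracking that \emph{every} constant is independent of $t$. This is ensured because all estimates are derived purely from the three numbers $M$, $\alpha$ (and the chosen aperture $\theta > \arctan(M/\alpha)$) appearing in \ref{item:uniform-continuity}--\ref{item:uniform-accretivity}, which are themselves uniform in $t$ by hypothesis; no spectral or domain information specific to an individual $A(t)$ enters. I would therefore present the proof for a single fixed $t$, note at each step that the constant depends only on $(M,\alpha,\theta)$, and refer to \cite[Section~1.4]{Ouhabaz:book} for the routine sectorial-form computations, spelling out only the coercivity step leading to (c) and (d) since that is where the interplay between the $H$- and $V$-norms is used.
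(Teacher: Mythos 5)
Your treatment of assertions (a)--(d) is essentially the paper's own proof: the numerical-range containment in $\overline{S_{\arctan(M/\al)}}$ coming from [H2]--[H3], the coercivity-based resolvent estimate on $V'$, the Cauchy-integral derivation of (b), the computation $\al \norm{e^{-sA(t)}x}_V^2 \le \Re\form{t}[e^{-sA(t)}x][e^{-sA(t)}x] \le \tfrac{C}{s}\norm{x}_H^2$ for (c) with the dual statement by duality, and the Laplace-transform deduction of (d) from (a) and (c) all match, and your point that every constant is produced from $(M,\al,\theta)$ alone is exactly the uniformity-in-$t$ argument the paper uses.

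There is, however, a genuine gap in your argument for assertion (e). You claim that replacing $A(t)$ by $A(t)+\mu$ only improves the coercivity [H3] while keeping $\al$, ``so all constants depend only on $M,\al,\theta$, not on $\mu$.'' But the constants in (a)--(d) were derived from \emph{both} form constants, and the boundedness constant of the perturbed form $\form{t}[u][v]+\mu\sprod{u}{v}$ is not $M$: the extra term can only be bounded by $\mu\,|\sprod{u}{v}|\le \mu\, c^2 \norm{u}_V\norm{v}_V$ with $c$ the norm of the embedding $V\embeds H$, so [H2] holds only with constant $M+\mu c^2$. Re-running your proof on the perturbed form therefore yields a numerical-range angle $\arctan\bigl((M+\mu c^2)/\al\bigr)$ which tends to $\pi/2$ as $\mu\to\infty$; the sector of analyticity shrinks and the constants degenerate, so this route does not give the estimates on the \emph{fixed} sector $S_{\pi/2-\arctan(M/\al)}$, nor the resolvent bound for $z\notin S_\theta$ with the original fixed $\theta$, uniformly in $\mu$. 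The paper closes this with a different, one-line observation: $e^{-z(A(t)+\mu)}=e^{-\mu z}e^{-zA(t)}$ with $|e^{-\mu z}|\le 1$ for $z$ in the sector, so every semigroup quantity for $A(t)+\mu$ is pointwise dominated in norm (in $H$, $V$ and $V'$) by the corresponding one for $A(t)$; assertion (b) for $A(t)+\mu$ then follows by the Cauchy formula and (d) by the Laplace transform, with constants manifestly independent of $\mu$. (Equivalently, one may observe that the $H$-numerical range of $A(t)+\mu$ is the translate by $\mu\ge 0$ of that of $A(t)$ and hence still lies in the \emph{same} sector $\overline{S_{\arctan(M/\al)}}$; some argument of this kind, rather than the ``same hypotheses, same constants'' claim, is needed.)
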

\begin{proof}
  Fix $t \in [0, \tau]$. By uniform boundedness and coercivity,
  \begin{equation}\label{num0}
| \Im  \form{t}[u][u] |  \  \le M \norm{u}_V^2 \le \nicefrac{M}{\alpha} \, \Re \form{t}[u][u].
\end{equation}
This means that $A(t)$ has numerical range contained in the closed
sector $\overline{\sector{\omega_0}}$ with $\omega_0 = \arctan(\nicefrac{M}{\alpha})$. This
implies the first part of assertion \ref{item:rien}, see
e.g. \cite[Theorem~1.54]{Ouhabaz:book}.  Let $u \in V$ and set
$\varphi = (\la + \calA(t)) u \in V'$.  Then
\[  
   \dprod{\varphi}{u} = \la \norm{u}_H^2 + \form{t}[u][u]
\]
and so coercivity yields
  \begin{equation}    \label{eq:extrapolation-1}
    \begin{split}
   \norm{u}_V^2 & \le \tfrac1{\al}  \Re \form{t}[u][u] 
                 \le  \tfrac1{\al} \bigl( |\dprod{\varphi}{u}| + |\la| \norm{u}_H^2\bigr)  \\
               & \le  \tfrac1{\al} \bigl( \norm{\varphi}_{V'} \norm{u}_V  + |\la| \norm{u}_H^2\bigr). 
    \end{split}
  \end{equation}
  We aim to estimate $ |\la| \norm{u}_H^2$ against $\norm{u}_V
  \norm{\varphi}_{V'}$.  Since the numerical range of $\form{t}$ in
  contained in $\overline{\sector{\omega_0}}$ we have
\begin{align*}
  \dist(\la, -\sector{\om_0}) \, \norm{u}_H^2 & \le \; \Bigl|\la + \form{t}[\tfrac{u}{\norm{u}}][\tfrac{u}{\norm{u}}] \Bigr| \,\norm{u}_H^2\\
                                    & \le \; \bigl| \dprod{ (\la I + \calA(t))u}{u}\bigr| 
                                      \le \; \norm{u}_V \norm{\varphi}_{V'}.
\end{align*}
Now let $\theta>\om_0$ and $\la \not \in \sector{\theta}$. Then $
\dist(\la, -\sector{\om_0}) \ge |\la| \sin(\theta{-}\om_0)$ and therefore
\[
   |\la| \norm{u}_H^2 \le \tfrac{1}{\sin(\theta{-}\om_0)}  \norm{u}_V \norm{\varphi}_{V'}
\]
as desired. From this and (\ref{eq:extrapolation-1}) it follows that
\begin{equation}  \label{eq:boundedly-invertible}
  \norm{u}_V \le  \tfrac1{\al} \bigl(1+ \tfrac{1}{\sin(\theta{-}\om_0)}\bigr)\,  \norm{(\la + \calA(t)) u  }_{V'}
\end{equation}
uniformly for all $\la \not \in \sector{\theta}$, $\theta>\om_0$. This
implies that $(\la +\calA(t))$ is invertible with a uniform norm bound
on $\sector{\theta}^\complement$, $\theta>\om_0$. This is equivalent to
$-\calA(t)$ being the generator of a bounded analytic semigroup on
$V'$. The bound is independent of $t$. This proves assertion
\ref{item:rien}.

Assertion \ref{item:standard-0} follows from the analyticity of the
semigroups $(e^{-s \, A(t)})_{s\ge 0}$ on $H$ and $(e^{-s \,
  \calA(t)})_{s\ge 0}$ on $V'$ and the Cauchy formula as usual.

For assertion \ref{item:standard-1},  observe that for $x \in H$
\begin{align*}
 \al \norm{  e^{-s \, A(t)} x}_V^2 
\le & \;  \Re \form{t}[ e^{-s \, \calA(t)} x][e^{-s \, \calA(t)} x]  \\
  = & \;  \Re \sprod{ \calA(t) e^{-s \, \calA(t)} x}{e^{-s \,\calA(t)} x} \\
\le &\;   \tfrac{C}{s}\, \norm{x}_H^2.
\end{align*}
The second inequality in \ref{item:standard-1} follows by duality.

The estimate \ref{item:resol} follows in a a natural way from
\ref{item:rien} and \ref{item:standard-1} by writing the resolvent as
the Laplace transform of the semigroup.  Finally, in order to prove
assertion \ref{item:tout}, we notice that for a constant $\mu \ge 0$ we
have
\[
    \norm{e^{-z \,(A(t) + \mu)} x }_H \le \norm{e^{-z \,  A(t)} x }_H,
\]
for all $z \in \sector{\pihalbe{-}\arctan(\nicefrac{M}{\alpha})}$. The same
estimate also holds when replacing the norm of $H$ by the norm of $V$
or the norm of $V'$. Now we use the Cauchy formula to obtain
\ref{item:standard-0} for $A(t) + \mu$.  Assertion \ref{item:resol} for
$A(t) + \mu$ follows again by the Laplace transform. The other
estimates are obvious.
\end{proof}

Finally we mention the following easy corollary of the proposition. 
\begin{corollary}  \label{cor:resolvent-diffs}
Let $\omega: \RR \to \RR_+$ be some function  and 
assume that
\[
| \form{t}[u][v]-\form{s}[u][v]| \le \omega(|t{-}s|) \norm{u}_V \norm{v}_V
\]
for all $u, v  \in V$. Then 
\[
   \norm{ R(z,A(t)) - R(z,A(s)) }_{\BOUNDED(H)} \le \tfrac{ c_\theta}{|z|} \omega(| t{-}s|)
\]
for all $z \notin \sector{\theta}$ with any fixed  $\theta > \arctan(\nicefrac{M}{\alpha})$.
\end{corollary}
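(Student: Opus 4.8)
The plan is to derive the resolvent difference from the form difference via the "second resolvent-type" identity at the level of the sesquilinear forms, exploiting the smoothing of the resolvent into $V$ provided by Proposition~\ref{prop:sg-extrapolation}\ref{item:resol}. Fix $z \notin \sector{\theta}$ with $\theta > \arctan(\nicefrac{M}{\alpha})$, and fix $s, t \in [0,\tau]$. For $h \in H$, put $u = R(z,A(t))h$ and $w = R(z,A(s))h$, so that $(z - A(t))u = h = (z-A(s))w$ with $u, w \in \DOMAIN(A(t)) \subset V$ and $\DOMAIN(A(s)) \subset V$ respectively. The first step is to write, for every $v \in V$,
\[
  \sprod{(A(t) - A(s))u}{v} = \form{t}[u][v] - \form{s}[u][v],
\]
which makes sense because $u \in V$ and which is bounded by $\omega(|t{-}s|)\,\norm{u}_V\,\norm{v}_V$ by hypothesis; hence the functional $v \mapsto \form{t}[u][v] - \form{s}[u][v]$ extends to an element of $V'$ of norm at most $\omega(|t{-}s|)\,\norm{u}_V$, and in particular $(A(t)-A(s))u = (\calA(t) - \calA(s))u$ makes sense in $V'$ with $\norm{(\calA(t)-\calA(s))u}_{V'} \le \omega(|t{-}s|)\,\norm{u}_V$.

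The second step is the algebraic identity. Since $(z-A(t))u = (z-A(s))w$ in $H \subset V'$, we get $(z - \calA(s))(u - w) = (\calA(s) - \calA(t))u$ in $V'$, and therefore, applying $R(z,\calA(s)) \in \BOUNDED(V')$ (which exists and is uniformly bounded by Proposition~\ref{prop:sg-extrapolation}, as $-\calA(s)$ generates a bounded analytic semigroup on $V'$),
\[
  R(z,A(t))h - R(z,A(s))h \;=\; u - w \;=\; R(z,\calA(s))\,(\calA(s) - \calA(t))\,u .
\]
Now estimate: by Proposition~\ref{prop:sg-extrapolation}\ref{item:standard-1} applied with the resolvent written as the Laplace transform of the semigroup (that is, the inequality underlying assertion \ref{item:resol}), we have $\norm{R(z,\calA(s))\phi}_H \le \tfrac{C_\theta}{\sqrt{|z|}}\,\norm{\phi}_{V'}$ for $z \notin \sector{\theta}$; combining with the bound from the first step,
\[
  \norm{R(z,A(t))h - R(z,A(s))h}_H \;\le\; \tfrac{C_\theta}{\sqrt{|z|}}\,\omega(|t{-}s|)\,\norm{u}_V .
\]
Finally, $\norm{u}_V = \norm{R(z,A(t))h}_V \le \tfrac{C_\theta}{\sqrt{|z|}}\,\norm{h}_H$ by Proposition~\ref{prop:sg-extrapolation}\ref{item:resol} again. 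Multiplying the two factors of $\tfrac{C_\theta}{\sqrt{|z|}}$ gives $\norm{R(z,A(t))h - R(z,A(s))h}_H \le \tfrac{c_\theta}{|z|}\,\omega(|t{-}s|)\,\norm{h}_H$, and taking the supremum over $\norm{h}_H = 1$ yields the claim with $c_\theta = C_\theta^2$.

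The only point requiring a little care — the "main obstacle," though it is minor — is the justification that $(A(t)-A(s))u$, a priori only defined when $u$ lies in $\DOMAIN(A(t)) \cap \DOMAIN(A(s))$ (which it need not, since these domains differ), should be interpreted as $(\calA(t)-\calA(s))u \in V'$; this is legitimate precisely because $u \in V = \DOMAIN(\form{t}) = \DOMAIN(\form{s})$, so both $\calA(t)u$ and $\calA(s)u$ are well-defined elements of $V'$ and the difference of the equations $(z-A(t))u = h$, $(z - A(s))w = h$ can be read in $V'$. One should also note that the estimate is uniform in $z$ on $\sector{\theta}^\complement$ because all constants in Proposition~\ref{prop:sg-extrapolation} are, and the bound $\omega(|t-s|)$ is symmetric in $s,t$ so the roles of $s$ and $t$ may be interchanged freely.
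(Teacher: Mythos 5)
Your argument is correct and is essentially the paper's proof in dual form: both rest on the second resolvent identity combined with the form-difference hypothesis and the $C_\theta|z|^{-\einhalb}$ resolvent smoothing of Proposition~\ref{prop:sg-extrapolation}, the paper pairing with $v\in V$ and estimating $R(z,A(s))u$ and $R(z,A(t))^*v$ in the $V$--norm, while you instead use the dual $V'\to H$ bound for $R(z,\calA(s))$ (which, as you note, follows from part \ref{item:standard-1} via the Laplace transform). The only blemish is a harmless sign slip: subtracting the two resolvent equations gives $(z-\calA(s))(u-w)=(\calA(t)-\calA(s))u$, which does not affect the norm estimate.
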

\begin{proof} Observe that for $u, v \in V$,
\begin{align*}
    & \;\bigl|\sprod{ R(z,A(t)) u - R(z,A(s)) u}{v}\bigr| \\
  = &\; \bigl|\sprod{ R(z,A(t)) ( A(s)-A(t) ) R(z,A(s)) u}{v}\bigr| \\
  = &\; \bigl|\sprod{ A(s) R(z,A(s)) u} { R(z,A(t))^* v} - \sprod{ A(t)R(z,A(s))u} { R(z,A(t))^* v}\bigr| \\
  = &\; \bigl|\form{s} [R(z,A(s)) u][ R(z,A(t))^* v] - \form{t}[R(z,A(s)) u][R(z,A(t))^* v]\bigr|\\
\le & \; \tfrac{c_\theta} {|z|} \, \omega(| s{-}t |) \, \norm{u}_H\,\norm{v}_H,
\end{align*}
where we used Proposition~\ref{prop:sg-extrapolation}\ref{item:resol}.
  \end{proof}

 Next  we come to a formula for the solution $u$ of (P') in $V'$.
  Recall that $u$ exists by Lions' theorem mentioned in the
  introduction.  
 This formula already appears in Acquistapace-Terreni
  \cite{AcquistapaceTerreni}.  
  Fix $f \in C_c^\infty(0, \tau; H)$ and $u_0 \in H$.

\begin{lemma}\label{lem:AT-formula-for-u}
  For almost every  $t \in (0, \tau)$, we have, in $V'$,
  \begin{equation}    \label{eq:AT-formula-for-u}
    \begin{split}
              u(t) = & \;  \int_0^t  e^{-(t{-}s) \calA(t)} (\calA(t){-}\calA(s)) u(s)\,\ds \\ 
                     & \quad + \int_0^t e^{-(t{-}s)  \calA(t) } f(s)\,\ds  + e^{-t \, \calA(t)}u_0.
    \end{split}
  \end{equation}
\end{lemma}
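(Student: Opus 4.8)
Since $f \in C_c^\infty(0,\tau;H) \subset L_2(0,\tau;V')$ and $u_0 \in H \subset V'$, Lions' theorem (recalled in the introduction) provides the solution $u$ of (P') with $u \in W^1_2(0,\tau;V') \cap L_2(0,\tau;V)$, and $u$ is continuous with values in $V'$. Fix $t \in (0,\tau)$ and consider the map
\[
  w(s) := e^{-(t-s)\calA(t)} u(s), \qquad s \in [0,t].
\]
Then $w(0) = e^{-t\calA(t)} u_0$ and $w(t) = u(t)$, so the asserted formula amounts to showing that $w(t)-w(0) = \int_0^t w'(s)\,\ds$ with $w'$ evaluated by the product rule and the equation. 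The strategy is to carry out this computation first on $[0,t-\eps]$, where no singularity is present, and then let $\eps \downarrow 0$.

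First I would check that for every $\eps \in (0,t)$ the map $w$ is absolutely continuous on $[0,t-\eps]$ with values in $V'$ and that, for a.e.\ $s$,
\[
  w'(s) = \calA(t)\, e^{-(t-s)\calA(t)} u(s) + e^{-(t-s)\calA(t)} u'(s).
\]
On this interval $t-s \ge \eps$, so $r \mapsto \calA(t) e^{-r\calA(t)}$ is continuous (indeed analytic) into $\BOUNDED(V')$ with norm $\le C/\eps$ by Proposition~\ref{prop:sg-extrapolation}, while $u$ is absolutely continuous into $V'$ with $u' \in L_2$; moreover $u(s) \in V = \DOMAIN(\calA(t))$ for a.e.\ $s$, so $e^{-(t-s)\calA(t)}$ commutes with $\calA(t)$ applied to $u(s)$. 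The product rule then follows from a routine Riemann–sum argument: for a partition $a = s_0 < \dots < s_n = b$ of a subinterval of $[0,t-\eps]$ one writes $w(s_{k+1})-w(s_k) = e^{-(t-s_{k+1})\calA(t)}\bigl(u(s_{k+1})-u(s_k)\bigr) + \bigl(e^{-(t-s_{k+1})\calA(t)}-e^{-(t-s_k)\calA(t)}\bigr)u(s_k)$, replaces the two differences by $\int_{s_k}^{s_{k+1}} u'(s)\,\ds$ and $\int_{s_k}^{s_{k+1}} \calA(t) e^{-(t-s)\calA(t)} u(s_k)\,\ds$ respectively, sums, and passes to the limit over finer partitions using the uniform continuity of the relevant integrands. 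Inserting $u'(s) = f(s) - \calA(s)u(s)$ and using the commutation just mentioned rewrites this as
\[
  w'(s) = e^{-(t-s)\calA(t)}\bigl(\calA(t)-\calA(s)\bigr) u(s) + e^{-(t-s)\calA(t)} f(s).
\]

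Integrating over $[0,t-\eps]$ and letting $\eps \downarrow 0$ then finishes the proof. For the left-hand side, $w(t-\eps) = e^{-\eps\calA(t)} u(t-\eps) \to u(t)$ in $V'$: split off $e^{-\eps\calA(t)}\bigl(u(t-\eps)-u(t)\bigr)$, controlled by $\norm{e^{-\eps\calA(t)}}_{\BOUNDED(V')} \le C$ (Proposition~\ref{prop:sg-extrapolation}\ref{item:rien}) and the $V'$-continuity of $u$, and $\bigl(e^{-\eps\calA(t)}-I\bigr)u(t)$, which vanishes by strong continuity of the semigroup. For the right-hand side, both integrands are dominated by a fixed $L_1(0,t)$ function: by hypothesis \ref{item:uniform-continuity} one has $\calA(s) \in \BOUNDED(V,V')$ with norm $\le M$ uniformly in $s$, hence $\norm{e^{-(t-s)\calA(t)}(\calA(t)-\calA(s))u(s)}_{V'} \le 2MC\,\norm{u(s)}_V$, which lies in $L_2(0,t) \subset L_1(0,t)$ since $u \in L_2(0,\tau;V)$; and $\norm{e^{-(t-s)\calA(t)}f(s)}_{V'} \le C\norm{f(s)}_{V'}$ is bounded. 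Dominated convergence yields \eqref{eq:AT-formula-for-u}. The only genuinely delicate point is the justification of the product rule near the endpoint $s=t$, i.e.\ reconciling the mere $W^1_2(0,\tau;V')$-regularity of $u$ with the application of the unbounded operator $\calA(t)$; this is precisely why one works on $[0,t-\eps]$ first, the subsequent passage to the limit being harmless because, thanks to the uniform bound $\norm{\calA(t)-\calA(s)}_{\BOUNDED(V,V')} \le 2M$, the limiting integrand carries no singularity at $s=t$.
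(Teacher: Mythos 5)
Your proof is correct and follows essentially the same route as the paper: both introduce the auxiliary function $s \mapsto e^{-(t-s)\calA(t)}u(s)$, differentiate it via the product rule and the equation $u' = f - \calA(\cdot)u$, and integrate from $0$ to $t$. The only difference is presentational — you justify the product rule by truncating to $[0,t-\eps]$ and passing to the limit, whereas the paper directly asserts the distributional derivative and uses $u \in L_2(0,\tau;V)$ to conclude $v \in W^1_2(0,\tau;V')$ — so no further comparison is needed.
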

\begin{proof}
 Recall that $u \in W^1_2(0, \tau; V')$ by Lions' theorem and hence $u$ has a continuous representative. 
 Fix $t \in (0, \tau)$ such that $\DOMAIN(\calA(t)) = V$ (recall that this is true  for almost all $t$). 
 Set $v(s) := e^{-(t{-}s) \, \calA(t) } u(s)$ for $0<s\le
  t<\tau$. Recall that $-\calA(t)$ generates a bounded semigroup $e^{-s  \calA(t) }$  on $V'$, see
  Proposition~\ref{prop:sg-extrapolation}~\ref{item:standard-0}.  
  Since  $u \in W^1_2(0,\tau; V')$,   $v$ has a distributional derivative in $V'$ which satisfies 
  \begin{align*}
    v'(s) = & \; \calA(t) e^{-(t{-}s)  \calA(t) } u(s) + e^{-(t{-}s)  \calA(t) } (f(s) - \calA(s) u(s) )\\
          = & \;  e^{-(t{-}s)  \calA(t) } (\calA(t){-}\calA(s)) u(s) + e^{-(t{-}s)  \calA(t) } f(s). 
  \end{align*}
  Using the fact that $u \in L_2(0, \tau; V)$, it follows that $v \in W^1_2(0,\tau; V')$. Hence
\[
  v(t) - v(0) =  \int_0^t v'(s) \, \ds = \int_0^t  e^{-(t{-}s) \calA(t) } (\calA(t){-}\calA(s)) u(s)\,\ds + \int_0^t e^{-(t{-}s) \calA(t) } f(s)\,\ds. 
\]
This gives
(\ref{eq:AT-formula-for-u}) by observing that $v(t) = u(t)$ and
$v(0)=e^{-t \,\calA(t)}u_0$.
\end{proof}

\begin{lemma}\label{lem:AT-formula-for-Au}
  Suppose (\ref{eq:Dini-3-2-condition}). Then for almost all $t \in [0,  \tau]$
\[
\calA(t) u(t) = (Q \calA(\cdot)u(\cdot))(t) + (L f)(t) + (Ru_0)(t)
\]
in $V'$, where
\begin{equation}  \label{eq:operator-Q}
(Q g)(t) := \int_0^t  \calA(t) e^{-(t{-}s)  \calA(t)} (\calA(t) - \calA(s)) \calA(s)^{-1}g(s)\,\ds
\end{equation}
and
\begin{equation}  \label{eq:operator-L}
  (L f )(t) :=   \calA(t) \int_0^t  e^{-(t{-}s)  \,\calA(t)} f(s)\,\ds 
  \quad \text{and} \quad  
  (R u_0)(t) := \calA(t) e^{-t \, \calA(t)} u_0.
\end{equation}
\end{lemma}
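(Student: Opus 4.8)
The plan is to differentiate the identity of Lemma~\ref{lem:AT-formula-for-u}, i.e.\ to apply $\calA(t)$ to both sides of \eqref{eq:AT-formula-for-u}. Since $\calA(t)$, regarded as an unbounded operator on $V'$, has domain $V$, and since $u\in L_2(0,\tau;V)$ by Lions' theorem so that $u(t)\in V$ for almost every $t$, the left-hand side $\calA(t)u(t)$ is a well-defined element of $V'$ for a.e.\ $t$; the whole content of the lemma is then to check that each of the three terms on the right-hand side of \eqref{eq:AT-formula-for-u} lies in $\DOMAIN(\calA(t))$ for a.e.\ $t$, that $\calA(t)$ may be pulled inside the two integrals, and that the resulting expressions are exactly $(Q\calA(\cdot)u(\cdot))(t)$, $(Lf)(t)$ and $(Ru_0)(t)$. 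The tools are the uniform bounds of Proposition~\ref{prop:sg-extrapolation}: $\norm{\calA(t)e^{-r\calA(t)}}_{\BOUNDED(V')}\le\tfrac{C}{r}$ from \ref{item:standard-0} and, composing \ref{item:standard-1} with itself, $\norm{e^{-r\calA(t)}}_{\BOUNDED(V',V)}\le\tfrac{C}{r}$, together with $\norm{\calA(s)^{-1}}_{\BOUNDED(V',V)}\le C$ from \eqref{eq:boundedly-invertible} (recall we have arranged $\calA(t)$ to be boundedly invertible) and the form bound $\norm{(\calA(t){-}\calA(s))w}_{V'}\le\om(|t{-}s|)\norm{w}_V$ following from \eqref{eq:dini-regularity-of-forms}. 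I also record the one place where \eqref{eq:Dini-3-2-condition} enters: since $\om$ is non-decreasing it forces $\om(r)\,r^{-\einhalb}\to 0$ as $r\to 0^+$, and therefore the function $r\mapsto\om(r)/r$ lies in $L_1(0,\tau)$.

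For the first term, the pointwise estimate $\norm{\calA(t)e^{-(t-s)\calA(t)}(\calA(t){-}\calA(s))u(s)}_{V'}\le\tfrac{C\,\om(t-s)}{t-s}\,\norm{u(s)}_V$, combined with $r\mapsto\om(r)/r$ in $L_1(0,\tau)$, $\norm{u(\cdot)}_V\in L_2(0,\tau)$ and Young's convolution inequality, shows that $s\mapsto\calA(t)e^{-(t-s)\calA(t)}(\calA(t){-}\calA(s))u(s)$ is Bochner integrable over $(0,t)$ for a.e.\ $t$. As $e^{-(t-s)\calA(t)}(\calA(t){-}\calA(s))u(s)\in V=\DOMAIN(\calA(t))$ for every $s<t$ (the analytic semigroup on $V'$ maps into the domain at positive times), the standard fact that a closed operator commutes with the Bochner integral gives $\int_0^t e^{-(t-s)\calA(t)}(\calA(t){-}\calA(s))u(s)\,\ds\in\DOMAIN(\calA(t))$ and lets $\calA(t)$ pass under the integral. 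Writing $(\calA(t){-}\calA(s))u(s)=(\calA(t){-}\calA(s))\calA(s)^{-1}\big(\calA(s)u(s)\big)$, which is legitimate because $u(s)\in V$, identifies this term with $(Q\,\calA(\cdot)u(\cdot))(t)$ as in \eqref{eq:operator-Q}; note that $\calA(\cdot)u(\cdot)=f(\cdot)-u'(\cdot)\in L_2(0,\tau;V')$, so $Q$ is indeed applied to an admissible function.

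For the second term we exploit that $f\in C_c^\infty(0,\tau;H)$, in particular Lipschitz. With $r=t-s$ write $\int_0^t e^{-(t-s)\calA(t)}f(s)\,\ds=\int_0^t e^{-r\calA(t)}\big(f(t-r)-f(t)\big)\,\dr+\Big(\int_0^t e^{-r\calA(t)}\,\dr\Big)f(t)$. The second summand equals $\calA(t)^{-1}(I-e^{-t\calA(t)})f(t)\in\DOMAIN(\calA(t))$, while in the first integral $\norm{\calA(t)e^{-r\calA(t)}\big(f(t-r)-f(t)\big)}_{V'}\le\tfrac{C}{r}\norm{f(t-r)-f(t)}_{V'}$ is bounded uniformly in $r\in(0,t)$ because $f$ is Lipschitz, so the same closed-operator argument shows the first integral, hence the whole integral, lies in $\DOMAIN(\calA(t))$ with $\calA(t)$ passing inside; this is, by definition, $(Lf)(t)$ of \eqref{eq:operator-L}. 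The third term is immediate: for $t>0$ the analytic semigroup on $V'$ sends $u_0\in H\subset V'$ into $\DOMAIN(\calA(t))$, so $\calA(t)e^{-t\calA(t)}u_0=(Ru_0)(t)$. Applying $\calA(t)$ to \eqref{eq:AT-formula-for-u} and collecting these three identities proves the lemma for a.e.\ $t\in[0,\tau]$.

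The only genuinely delicate step is the first term, where the singularity $\tfrac{1}{t-s}$ of $\calA(t)e^{-(t-s)\calA(t)}$ is damped only by $\om(t-s)$; this is precisely where \eqref{eq:Dini-3-2-condition} is needed (through $r\mapsto\om(r)/r\in L_1$), and it is also the reason the identity holds only for almost every $t$ rather than for every $t\in(0,\tau)$.
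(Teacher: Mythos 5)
Your proof is correct and follows the paper's strategy almost step for step: apply $\calA(t)$ to the Acquistapace--Terreni identity of Lemma~\ref{lem:AT-formula-for-u}, reduce the lemma to showing that each of the three terms lies in $\DOMAIN(\calA(t))$, handle the first term via the bound $\norm{\calA(t)e^{-(t-s)\calA(t)}(\calA(t)-\calA(s))u(s)}_{V'}\le C\,\tfrac{\omega(t-s)}{t-s}\norm{u(s)}_V$ together with the integrability of $\omega(r)/r$ coming from \eqref{eq:Dini-3-2-condition}, and conclude with the closedness of $\calA(t)$ (Hille's theorem); the third term is immediate by analyticity. The one place you genuinely diverge is the term $\int_0^t e^{-(t-s)\calA(t)}f(s)\,\ds$: the paper gets its membership in $\DOMAIN(\calA(t))$ for free, by subtraction, from the identity \eqref{eq:AT-formula-for-u} together with $u(t)\in V$ and the already-established domain membership of the other two terms, whereas you prove it directly by writing $\int_0^t e^{-r\calA(t)}\bigl(f(t-r)-f(t)\bigr)\,\dr+\calA(t)^{-1}(I-e^{-t\calA(t)})f(t)$ and using the Lipschitz continuity of $f\in C_c^\infty(0,\tau;H)$. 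Your route costs a few extra lines but is self-contained and does not lean on the identity itself to certify one of its own terms; the paper's route is shorter but only yields the domain membership of the $f$-term as a consequence of the other two. Both are valid, and your explicit remark that $\calA(\cdot)u(\cdot)=f-u'\in L_2(0,\tau;V')$, so that $Q$ is applied to an admissible function, is a useful point the paper leaves implicit. One cosmetic quibble: $e^{-(t-s)\calA(t)}(\calA(t)-\calA(s))u(s)$ lies in $\DOMAIN(\calA(t))$ only for almost every $s<t$ (one needs $u(s)\in V$), not for every $s<t$ as you write; this does not affect the argument.
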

\begin{proof}
 As in the  proof of the previous lemma, we  fix $t \in (0, \tau)$ such that $V = \DOMAIN(\calA(t))$.  
  It is enough to prove that each term in the sum
  (\ref{eq:AT-formula-for-u}) is in $V$. Observe
  that by analyticity, $e^{-t \,\calA(t)}u_0 \in \DOMAIN(\calA(t)) =
  V$. In passing we also note that since $u_0 \in H$, $\calA(t) e^{-t
  \,  \calA(t)}u_0 = A(t) e^{-t \,A(t)}u_0$.
  
   Concerning the first term,
  we recall that $u(s) \in V$ for almost all
  $s$ (note that $u \in L_2(0, \tau; V)$). Therefore,
  $e^{-(t{-}s) \calA(t)} (\calA(t){-}\calA(s)) u(s) \in V$  for almost every $s < t$ by the analyticity of the semigroup generated by
  $-\calA(t)$. In addition,
\begin{align*}
  \norm{ \calA(t) e^{(t{-}s) \calA(t)} (\calA(t){-}\calA(s)) u(s) }_{V'}
\le &\; \tfrac{C}{t-s} \norm{(\calA(t){-}\calA(s)) u(s) }_{V'} \\
 =  &\; \tfrac{C}{t-s} \sup_{\norm{v}_V = 1} |\form{t}[u(s)][v] - \form{s}[u(s)][v]|\\
\le &\; \tfrac{C}{t-s} \, \omega(t{-}s) \norm{u(s)}_V.
\end{align*}
Note that the operator
\begin{equation}\label{op.H}
{\mathcal H}: \ h \mapsto \int_0^t \tfrac{\omega(t{-}s)}{t-s}  \,h(s) \, \ds 
\end{equation}
is bounded on $L_p(0, \tau; \RR)$ for all $p \in [1, \infty]$.  The
reason is that the associated kernel $(t,s) \mapsto \eins_{[0,
  t]}(s)\tfrac{\omega(t{-}s)}{t-s}$ is integrable with respect to each
variable with a uniform bound  with respect to the other variable as can be seen easily from
(\ref{eq:Dini-3-2-condition}).  
Recall again that $\norm{u(\cdot)}_V \in L_2(0, \tau; \RR)$. Hence
\[
s \mapsto  \eins_{[0, t]}(s) \cdot \calA(t) e^{-(t{-}s)  \,\calA(t)} (\calA(t){-}\calA(s)) u(s)
\]
is in $L_1(0, \tau;V')$. Therefore, for every $\varepsilon > 0$
\[
   \int_0^{t-\varepsilon}  e^{-(t{-}s)  \calA(t)} (\calA(t){-}\calA(s)) u(s)\,\ds  \in \DOMAIN(\calA(t))
\]
and the fact that  $\calA(t)$ is a closed operator gives
\[
\calA(t)    \int_0^{t-\varepsilon}  e^{-(t{-}s) \calA(t)} (\calA(t){-}\calA(s)) u(s)\,\ds
=    \int_0^{t-\varepsilon} \calA(t)  e^{-(t{-}s) \calA(t)} (\calA(t){-}\calA(s)) u(s)\,\ds.
\]
 Since 
\[ 
  s \mapsto  \eins_{[0, t]}(s) \cdot \calA(t) e^{-(t{-}s) \calA(t)} (\calA(t){-}\calA(s)) u(s) \in L_1(0, \tau;V'),
\]
 we may let $\varepsilon \to 0$ and obtain 
$\int_0^{t}  e^{-(t{-}s) \calA(t)} (\calA(t){-}\calA(s)) u(s)\,\ds  \in \DOMAIN(\calA(t))$ and 
\[
\calA(t)    \int_0^t  e^{-(t{-}s) \calA(t)} (\calA(t){-}\calA(s)) u(s)\,\ds
=    \int_0^{t} \calA(t)  e^{-(t{-}s)  \calA(t)} (\calA(t){-}\calA(s)) u(s)\,\ds.
\]
Finally, the equality (\ref{eq:AT-formula-for-u}) and the fact that
$u(t) \in V$ for almost all $t$ yields
\[
\int_0^t e^{-(t{-}s) \calA(t)} f(s) \, \ds \in \DOMAIN(\calA(t)).
\]
This proves the lemma.
\end{proof}

\medskip

Recall the definition of the operator $L$,
\[
   L f(t) = \calA(t) \int_0^t e^{-(t{-}s) A(t)} f(s)\,\ds. 
\]
Let $f \in C_c^\infty(0, \tau; H)$ and denote by $f_0$ its extension
to the whole of $\RR$ by $0$ outside $(0, \tau)$.  Observe that $f_0$
is then in the Schwarz class $\SCHWARTZ(\RR; H)$.  We denote for
Fourier transform of $f_0$ by $\FOURIER f_0$ or $\widehat
f_0$. Clearly, 
\[
 \int_{-\infty}^t e^{-(t{-}s) A(t)} f_0(s)\,\ds 
 =  \tfrac1{2\pi} \;  \int_{-\infty}^t e^{-(t{-}s) A(t)} \int_\RR e^{\ui s \xi } \widehat{f_0}(\xi)\,\dxi \,\ds
\]
Now, exponential stability of $(e^{-s \, A(t)})_{s\ge 0}$ and the
fact that $f_0 \in \SCHWARTZ(\RR; H)$ allows us to use Fubini's theorem,
giving
\begin{align*}
 \int_{-\infty}^t e^{-(t{-}s) A(t)} \int_\RR e^{\ui s \xi } \widehat{f_0}(\xi)\,\dxi\,\ds
 = & \;   \int_\RR  \Bigl(\int_{-\infty}^t e^{-(t{-}s) (\ui\xi + A(t))}  \,\ds \Bigr) \widehat{f_0}(\xi)  e^{\ui t \xi }\,\dxi \\
 = & \;   \int_\RR (\ui\xi+A(t))^{-1}\widehat{f_0}(\xi)  e^{\ui t \xi }\,\dxi.
\end{align*}
It follows  that 
\begin{equation}\label{egal0}
  \int_{-\infty}^t e^{-(t{-}s) A(t)} f_0(s)\,\ds  =  \tfrac1{2\pi} \int_\RR (\ui\xi+A(t))^{-1}\widehat{f_0}(\xi)  e^{\ui t \xi }\,\dxi.
\end{equation}
Observe that the right hand side of (\ref{egal0}) converges in norm (as a Bochner
integral) and that the same holds for
\[
  \int_\RR A(t) (\ui\xi+A(t))^{-1}\widehat{f_0}(\xi)  e^{\ui t \xi }\,\dxi
\]
since $\widehat{f_0} \in \SCHWARTZ(\RR; H)$. Thus, both  terms in \eqref{egal0} 
take  values in $\DOMAIN(A(t))$. This shows that for $f\in
C_c^\infty(0, \tau; H)$, $(Lf)(t)$ is a well-defined function taking
pointwise values in $H$. Hille's theorem (see e.g.
\cite[II.2,~Theorem~6]{DiestelUhl:vector-measures}) then allows us to
take the closed operator $A(t)$ inside the integral which finally
gives the representation formula
\begin{equation}  \label{eq:L-comme-pso}
    L f(t) = \FOURIER^{-1}\bigl(\xi \mapsto  \sigma(t, \xi)  \widehat{f_0}(\xi) \bigr)(t),
\end{equation}
that allows us to see $L$ as a pseudo-differential operator with
operator-valued symbol
\begin{equation}  \label{eq:definition-symbol-sigma}
  \sigma(t, \xi) = 
\left\{
  \begin{array}{lcl}
     A(0)(\ui\xi+A(0))^{-1} & \text{if} &  t<0  \\
     A(t)(\ui\xi+A(t))^{-1} & \text{if} &  0\le t \le \tau \\
     A(\tau)(\ui\xi+A(\tau))^{-1} & \text{if} &  t > \tau \\
  \end{array}\right.
\end{equation}

\begin{lemma}\label{lem:operateur-L-sur-L2}
  Suppose that in addition to our standing assumptions
  \ref{item:constant-form-domain}-~\ref{item:uniform-accretivity} that 
  \eqref{eq:dini-regularity-of-forms} holds 
  with $\om: [0, \tau] \to [0, \infty)$  a non-decreasing function such that 
\begin{equation}  \label{eq:Dini-2-1-condition}
    \int_0^\tau \tfrac{\omega(t)^2}{t}\,\dt < \infty. 
\end{equation}
 Then $L$ is a bounded operator on $L_2(0, \tau; H)$.  
\end{lemma}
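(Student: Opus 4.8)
The plan is to use the representation \eqref{eq:L-comme-pso} of $L$ as a pseudo-differential operator with operator-valued symbol $\sigma$ given by \eqref{eq:definition-symbol-sigma}, and to invoke the $L_2$-boundedness theorem for such operators proved in Section~\ref{sec-pseudo} (Theorem~\ref{thmpseudo}). Everything then reduces to verifying that $\sigma$ satisfies the hypotheses of that theorem, with a modulus of continuity in the $t$-variable controlled by $\om$.

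\emph{Bounds in $\xi$.} Since the numerical range of $A(t)$ is contained in $\overline{S_{\om_0}}$ with $\om_0 = \arctan(\nicefrac{M}{\al}) < \pihalbe$, uniformly in $t$ (see the proof of Proposition~\ref{prop:sg-extrapolation}), the purely imaginary number $\ui\xi$ lies outside $S_{\om_0}$ for every $\xi \neq 0$, and hence $\norm{(\ui\xi + A(t))^{-1}}_{\BOUNDED(H)} \le C/|\xi|$ with $C$ independent of $t$ and $\xi$. Writing $\sigma(t,\xi) = I - \ui\xi\,(\ui\xi + A(t))^{-1}$ gives $\norm{\sigma(t,\xi)}_{\BOUNDED(H)} \le 1+C$. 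Differentiating, $\partial_\xi^k\sigma(t,\xi) = (-\ui)^k k!\,A(t)\,(\ui\xi + A(t))^{-(k+1)}$, so
\[
   \norm{\xi^k\,\partial_\xi^k\sigma(t,\xi)}_{\BOUNDED(H)} \le k!\,(1+C)\,C^k
\]
for all $k \ge 1$, which supplies whatever order of $\xi$-regularity Theorem~\ref{thmpseudo} requires.

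\emph{Regularity in $t$.} From $\sigma(t,\xi) - \sigma(s,\xi) = -\ui\xi\bigl[(\ui\xi + A(t))^{-1} - (\ui\xi + A(s))^{-1}\bigr]$ and Corollary~\ref{cor:resolvent-diffs}, applied with $z = -\ui\xi \notin S_\theta$ for a fixed $\theta \in (\om_0, \pihalbe)$ and using $(\ui\xi+A(t))^{-1} = -R(-\ui\xi, A(t))$, we get
\[
   \norm{\sigma(t,\xi) - \sigma(s,\xi)}_{\BOUNDED(H)} \le |\xi|\cdot \tfrac{c_\theta}{|\xi|}\,\om(|t{-}s|) = c_\theta\,\om(|t{-}s|),
\]
uniformly in $\xi$. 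Thus $t \mapsto \sigma(t,\xi)$ has a modulus of continuity that is a fixed multiple of $\om$, and the square-Dini hypothesis \eqref{eq:Dini-2-1-condition} is exactly what Theorem~\ref{thmpseudo} asks of this modulus. Consequently $L$ extends to a bounded operator on $L_2(\RR;H)$, and since $Lf$ depends only on $f|_{(0,\tau)}$ for $f \in C_c^\infty(0,\tau;H)$, restriction together with density yields boundedness on $L_2(0,\tau;H)$.

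The substantive difficulty is not located in this lemma but in Theorem~\ref{thmpseudo} itself: an operator-valued Muramatu--Nagase $L_2$-bound for symbols that are only square-Dini in $t$ (and smooth in $\xi$), carried out separately in Section~\ref{sec-pseudo}. The one point worth noting here is why the hypothesis is the weaker $\int_0^\tau \om(t)^2/t\,\dt < \infty$ rather than the plain Dini integral $\int_0^\tau \om(t)/t\,\dt$: this is an $L_2$ effect, coming from Plancherel's theorem applied in the $t$-variable.
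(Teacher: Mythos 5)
Your proof is correct and follows essentially the same route as the paper: both reduce the lemma to the operator-valued pseudo-differential Theorem~\ref{thmpseudo} via the symbol representation \eqref{eq:L-comme-pso}, obtaining the $\xi$-estimates from the uniform sectoriality of $A(t)$ and the $t$-regularity from Corollary~\ref{cor:resolvent-diffs}. The only point you leave implicit is that Theorem~\ref{thmpseudo} also requires the difference estimate $\norm{\partial_\xi^k\sigma(t,\xi)-\partial_\xi^k\sigma(s,\xi)}_{\BOUNDED(H)}\le c\,\om(|t{-}s|)\langle\xi\rangle^{-k}$ for $k=1,2$ (not just $k=0$); the paper obtains this from Cauchy's integral formula for derivatives applied to the resolvent difference, and your explicit formula for $\partial_\xi^k\sigma$ yields it just as well.
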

\begin{proof}
  We prove the Lemma by verifying the conditions of Theorem~\ref{thmpseudo} below. 
  Let $\sigma(t, \xi)$ be given by (\ref{eq:definition-symbol-sigma}). 
 We need to prove  that 
  \begin{align}
    \label{eq:yamazaki:1}
    \bignorm{ \partial_\xi^k \, \sigma(t, \xi) }_{\BOUNDED(H)} 
       & \le \; c \tfrac{1}{{(1+\xi^2)}^{\nicefrac{k}2}},\\
    \label{eq:yamazaki:2}
    \bignorm{ \partial_\xi^k \, \sigma(t, \xi) - \partial_\xi^k \, \sigma(s, \xi) }_{\BOUNDED(H)} 
       & \le \; c \tfrac{\omega(t{-}s)}{{(1+\xi^2)}^{\nicefrac{k}2}}, 
  \end{align}
  for $k = 0, 1, 2$.  For $k=0$, (\ref{eq:yamazaki:1}) is just the
  sectoriality of $A(t)$, see Proposition~\ref{prop:sg-extrapolation}
  whereas (\ref{eq:yamazaki:2}) is precisely
  Corollary~\ref{cor:resolvent-diffs}. Observe that a holomorphic
  function that satisfies 
\[
   \norm{ f(z) } \le C \; \tfrac{1}{|z|}
\]
on  the complement of a sector of angle $\theta$ will automatically satisfy 
\[
   \norm{ f^{(k)} (z) } \le C_{\theta, k} \, C \; \tfrac{1}{|z|^{k+1}}
\]
on the complement of strictly larger sectors, simply by Cauchy's integral
formula for derivatives. Conditions (\ref{eq:yamazaki:1}) and
(\ref{eq:yamazaki:2}) follow therefore for all $k \ge 1$.
\end{proof}

Next we prove that the operator $L$ extends to a  bounded operator on $L_p(0,\tau;H)$.
\begin{lemma}\label{lemLp}
Under the assumptions of the previous lemma the operator $L$ is  bounded  on $L_p(0, \tau; H)$ for all
$p \in (1, \infty)$.
\end{lemma}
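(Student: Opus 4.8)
The plan is to recognise $L$ as a Calderón--Zygmund operator with a $\BOUNDED(H)$-valued kernel, using its $L_2$-boundedness from Lemma~\ref{lem:operateur-L-sur-L2} as the starting point. For $f\in C_c^\infty(0,\tau;H)$ and $t\notin\supp f$, moving $A(t)$ inside the integral via Hille's theorem (as in the passage preceding \eqref{eq:L-comme-pso}) gives $Lf(t)=\int_0^\tau K(t,s)f(s)\,\ds$ with kernel $K(t,s):=\eins_{\{s<t\}}A(t)e^{-(t-s)A(t)}$; dually, $L^*g(t)=\int_t^\tau A(\sigma)^*e^{-(\sigma-t)A(\sigma)^*}g(\sigma)\,\ud\sigma$, with kernel $K^*(t,\sigma):=\eins_{\{\sigma>t\}}A(\sigma)^*e^{-(\sigma-t)A(\sigma)^*}$. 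Since the adjoint forms $\form{t}^*$ satisfy \ref{item:constant-form-domain}--\ref{item:uniform-accretivity} with the same constants and the same $\om$, Proposition~\ref{prop:sg-extrapolation} and Corollary~\ref{cor:resolvent-diffs} hold verbatim for the operators $A(\sigma)^*$ as well. The vector-valued Calderón--Zygmund theorem (its proof, via the Calderón--Zygmund decomposition, being unchanged for $\BOUNDED(H)$-valued kernels, as no UMD property of $H$ is used) then reduces the lemma to verifying the Hörmander integral condition for $K$ and for $K^*$, each in its second variable: the former gives $L$ bounded on $L_p$ for $1<p\le2$ (plus weak type $(1,1)$), the latter gives $L^*$ bounded on $L_{p'}$ for $1<p'\le2$, hence $L$ bounded on $L_p$ for $2\le p<\infty$.

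\smallskip
\textbf{The straightforward estimates.} Proposition~\ref{prop:sg-extrapolation}\ref{item:standard-0} gives $\norm{K(t,s)}_{\BOUNDED(H)}\le C\abs{t-s}^{-1}$ and, splitting $r$ in halves, $\norm{A(t)^2e^{-rA(t)}}_{\BOUNDED(H)}\le Cr^{-2}$. Since $\partial_s K(t,s)=A(t)^2e^{-(t-s)A(t)}$, the mean value theorem gives $\norm{K(t,s)-K(t,s')}_{\BOUNDED(H)}\le C\abs{s-s'}\abs{t-s}^{-2}$ whenever $\abs{t-s}>2\abs{s-s'}$ (so that $s,s'$ lie on the same side of $t$; the difference is $0$ if both exceed $t$), and integrating in $t$ over $\{\abs{t-s}>2\abs{s-s'}\}$ yields a uniform bound. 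The same computation for $K^*(t,\sigma)$ in its first (outer) variable is handled identically. What remains is the difference in the variable that carries the operator, i.e. $K(t,s)-K(t',s)$ and, symmetrically, $K^*(t,\sigma)-K^*(t,\sigma')$.

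\smallskip
\textbf{The essential estimate.} For $\abs{t-s}>2\abs{t-t'}$ write
\[
   K(t,s)-K(t',s)=A(t)\bigl(e^{-(t-s)A(t)}-e^{-(t'-s)A(t)}\bigr)+\bigl(A(t)e^{-(t'-s)A(t)}-A(t')e^{-(t'-s)A(t')}\bigr).
\]
The first term is $\pm\int A(t)^2e^{-rA(t)}\,\dr$ over the interval between $t-s$ and $t'-s$ (both $\ge\tfrac12\abs{t-s}$), hence $\le C\abs{t-t'}\abs{t-s}^{-2}$. For the second, representing $A(t)e^{-rA(t)}=\tfrac1{2\pi\ui}\int_{\partial\sector{\psi}}\lambda e^{-r\lambda}(\lambda-A(t))^{-1}\,\ud\lambda$ with $\arctan(\nicefrac{M}{\al})<\psi<\pihalbe$ and using Corollary~\ref{cor:resolvent-diffs} (the factor $\abs\lambda$ cancelling the $\abs\lambda^{-1}$ from the resolvent difference) gives $\norm{A(t)e^{-rA(t)}-A(t')e^{-rA(t')}}_{\BOUNDED(H)}\le C\om(\abs{t-t'})\,r^{-1}$, hence $\le 2C\om(\abs{t-t'})\abs{t-s}^{-1}$ with $r=t'-s$. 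Thus on $\{\abs{t-s}>2\abs{t-t'}\}$
\[
   \norm{K(t,s)-K(t',s)}_{\BOUNDED(H)}\le C\Bigl(\tfrac{\abs{t-t'}}{\abs{t-s}^2}+\tfrac{\om(\abs{t-t'})}{\abs{t-s}}\Bigr).
\]
Integrating in $s$, i.e. in $r=\abs{t-s}\in(2\abs{t-t'},\tau)$: the first term contributes $\le C$; for the second, $\om$ being non-decreasing, $\om(\abs{t-t'})\le\om(r)$ there, so it contributes $\le C\int_0^\tau\tfrac{\om(r)}{r}\,\dr<\infty$, the Dini-type bound $\int_0^\tau\om(r)r^{-1}\,\dr<\infty$ being what is actually used here (it follows from \eqref{eq:Dini-3-2-condition} via $\om(r)r^{-1}\le\tau^{\einhalb}\om(r)r^{-\nicefrac32}$). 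The identical computation for $A(\sigma)^*$ gives the Hörmander condition for $K^*$ in its second variable, and the Calderón--Zygmund theorem finishes the proof; by density of $C_c^\infty(0,\tau;H)$ the bounded extension agrees with the operator defined by the symbol $\sigma$.

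\smallskip
\textbf{The main obstacle} is precisely the second term above. Because the forms are only Dini-continuous --- not differentiable --- in $t$, one cannot differentiate $t\mapsto A(t)e^{-rA(t)}$, so its increment must be estimated directly through the resolvent difference of Corollary~\ref{cor:resolvent-diffs}, and this estimate lacks the extra decay $\abs{t-s}^{-1}$ that a standard Calderón--Zygmund kernel enjoys; that decay is recovered only upon integrating in $s$, which is exactly where the Dini condition on $\om$ is consumed. Everything else --- the size bound, the smoothness in the "cheap" variable, and the passage through the Calderón--Zygmund machinery --- is routine.
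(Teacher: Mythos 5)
Your proof is correct and follows essentially the same route as the paper: the same kernel $K(t,s)=\eins_{\{s<t\}}A(t)e^{-(t-s)A(t)}$, the same two Hörmander conditions (the cheap one in the variable not carrying the operator via $A(t)^2e^{-rA(t)}$, and the essential one split into a semigroup-time increment plus an operator increment controlled by Corollary~\ref{cor:resolvent-diffs}), followed by weak type $(1,1)$ for $L$ and $L^*$ and Marcinkiewicz interpolation with the $L_2$ bound. Your remark that the hypothesis actually consumed is $\int_0^\tau\omega(r)r^{-1}\,\dr<\infty$ (guaranteed by \eqref{eq:Dini-3-2-condition}) is accurate and matches what the paper's own computation uses.
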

\begin{proof} The operator $L$ is  a singular integral operator with operator-valued kernel
\[
   K(t,s) = \mathbbm{1}_{\{0\le s \le t \le \tau\}} A(t) e^{-(t-s)A(t)},
\]
where $\mathbbm{1}$ denotes the indicator function. We prove that both
$L$ and $L^*$ are of weak type $(1,1)$ operators and we conclude by
the Marcinkiewicz interpolation theorem together with the previous
lemma that $L$ is bounded on $L_p(0, \tau ; H)$ for all $p \in (1,
\infty)$. It is known (see, e.g. \cite[Theorems
III.1.2 and III.1.3]{RRT86}) that $L$ (respectively $L^*$) is of
weak type $(1,1)$ if the corresponding kernel $K(t, s)$ satisfies the  Hörmander
integral condition. This means that we have to verify 
\begin{equation}\label{Horm1}
 \int_{|t-s|\ge 2|s'-s|} \norm{ K(t,s) - K(t,s') }_{\BOUNDED(H)} \,\dt \le C
\end{equation}
and 
\begin{equation}\label{Horm2}
\int_{|t-s|\ge 2|s'-s|} \norm{ K(s,t) - K(s',t) }_{\BOUNDED(H)} \,\dt \le C
\end{equation}
for some constant $C$ independent of $s, s' \in (0, \tau)$.  Note that
the above mentioned theorems in \cite{RRT86} are formulated for
integral operators on $L_p(\RR; H)$ instead of $L_p(0, \tau; H)$;
however it is known that Hörmander's integral condition works on any
space satisfying the volume doubling condition, see \cite[page
15]{RRT86}.

First consider the integral in \eqref{Horm1}. When $s \le s'$ and
$2|s'{-}s| > \tau$ the integral vanishes. When $0 \le s \le s' \le
t\le \tau$ and $2 s' {-}s \le \tau$, using that the semigroup
$(e^{-s \,A(t)})_{s\ge 0}$ generated by $-A(t)$ is bounded
holomorphic, with a norm bound independent of $t$, we have for some
constant $C$
\begin{align*}
   &\; \int_{|t{-}s|\ge 2|s'{-}s|} \norm{ K(t,s) - K(t,s') }_{\BOUNDED(H)} \,\dt \\
 = &\; \int_{2s'{-}s}^\tau \norm{ A(t) e^{-(t{-}s)A(t)} - A(t) e^{-(t{-}s')A(t)}  }_{\BOUNDED(H)} \, \dt\\
 = &\;  \int_{2s'{-}s}^\tau \int_s^{s'} \norm{ A(t)^2 e^{-(t{-}r)A(t)} \,\dr  }_{\BOUNDED(H)} \, \dt\\
\le&\; C \int_{2s'{-}s}^\tau \int_s^{s'} \tfrac{1}{(t{-}r)^2} \,\dr \,\dt = C \int_{2s'{-}s}^\tau \left[ \tfrac{1}{t{-}s'} - \tfrac{1}{t{-}s} \right] \, \dt\\
 = &\; C \left[ \log \tfrac{t{-}s'}{t{-}s\,} \right]_{t = 2s'{-}s}^{t= \tau} \le C \log 2.
\end{align*}
When $s' < s$ and $3s-2s'>\tau$, the integral (\ref{Horm1})
vanishes. When $s'<s$ and $3s-2s' < \tau$, a similar calculation to
the above shows that the integral is bounded by $C
\log(\nicefrac{3}{2})$.

\noindent We now consider \eqref{Horm2}. When $s\le s'$, as above, we
may assume that $3s{-}2s'>0$, since otherwise the integral in
\eqref{Horm2} vanishes. We have
\begin{align*}
   &\; \int_{|t{-}s|\ge 2|s'{-}s|} \norm{ K(s,t) - K(s',t) }_{\BOUNDED(H)} \dt \\
=  &\; \int_{0}^{3s{-}2s'}  \norm{ A(s) e^{-(s{{-}}t)  A(s)} - A(s') e^{-(s'{-}t)  A(s')}  }_{\BOUNDED(H)} \,\dt\\
\le&\; \int_{0}^{3s{-}2s'}  \norm{ A(s) e^{-(s{-}t)  A(s)} - A(s) e^{-(s'{-}t)  A(s)}  }_{\BOUNDED(H)} \,\dt \\
   &\quad +  \int_{0}^{3s{-}2s'}  \norm{ A(s) e^{-(s'{-}t)  A(s)} - A(s') e^{-(s'{-}t)  A(s')}  }_{\BOUNDED(H)} \,\dt =: I_1 + I_2.
\end{align*}
The first term $I_1$ is handled exactly as in the proof of
\eqref{Horm1}. For the second term $I_2$, we write by the functional
calculus
\[
  A(s) e^{-(s'{-}t)  A(s)} - A(s') e^{-(s'{-}t) A(s')} 
 = \tfrac1{2\upi  i} \int_\Gamma  z e^{-t z} \bigl[ R(z, A(s)) - R(z, A(s'))\bigr]  \,\dz
\]
where $\Gamma$ is the boundary of an appropriate sector
$\sector{\theta}$.  We apply  Corollary~\ref{cor:resolvent-diffs} to deduce
\begin{align*}
\norm{ A(s) e^{-(s'{-}t)  A(s)} - A(s') e^{-(s'{-}t)  A(s')}  }_{\BOUNDED(H)} 
\le & \; C \int_0^\infty r e^{-(s'{-}t)r \cos\theta} \tfrac{\omega(s'{-}s)}{r} \,\dr\\
\le & \; C \, \tfrac{\omega(s'{-}s)}{s'{-}t}.
\end{align*}
Therefore,
\begin{align*}
    &   \int_{0}^{3s{-}2s'}  \norm{ A(s) e^{-(s'{-}t) A(s)} - A(s') e^{-(s'{-}t)  A(s')}  }_{\BOUNDED(H)} \,\dt  \\
\le & \; C \int_0^{3s{-}2s'} \tfrac{\omega(s'{-}s)}{s'{-}t} \,\dt
\le  \; C \int_0^\tau \omega(r) \,\tfrac{\dr}{r} = C',
\end{align*}
where we used the fact that $\omega$ is non-decreasing and $s'{-}s \le
s'{-}t$ to write the second inequality. Finally, the integral
(\ref{Horm2}) in the case $s'<s$ is treated similarly. 
Remark: A similar
reasoning for the weak type $(1,1)$ estimate for $L$ and $L^*$ appears
in \cite[p. 1051]{HM99}.
\end{proof}

Now we study the operator $R$. 
\begin{lemma}\label{lem:operateur-R}
  Assume (\ref{eq:p-Dini}). Then there exists $C > 0$ such that for
  every $u_0 \in ( H, \DOMAIN(A(0)))_{1-\nicefrac{1}{p},p}$:
\[ 
\norm{Ru_0}_{L_p(0, \tau;H)} \le  C \norm{u_0}_{( H, \DOMAIN(A(0)))_{1-\nicefrac{1}{p},p}}.
\]
\end{lemma}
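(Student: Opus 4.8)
The strategy is to compare $R u_0 (t) = A(t)e^{-tA(t)}u_0$ with the autonomous quantity $t \mapsto A(0)e^{-tA(0)}u_0$, for which the statement is classical, and to absorb the difference using the $p$--Dini condition \eqref{eq:p-Dini}. More precisely, first I would recall the well-known characterisation of the real-interpolation space: $u_0 \in (H, \DOMAIN(A(0)))_{1-\nicefrac{1}{p},p}$ if and only if the map $t \mapsto A(0)e^{-tA(0)}u_0$ belongs to $L_p(0,\tau;H)$, with equivalence of norms (this is in \cite[Chapter~1.13]{Triebel:interpolation} or \cite[Proposition~6.2]{Lunardi:book}, and uses that $-A(0)$ generates a bounded analytic semigroup, which holds after the reduction $\delta=0$ made at the beginning of the section). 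Hence it suffices to bound the difference
\[
   D(t) := A(t)e^{-tA(t)}u_0 - A(0)e^{-tA(0)}u_0
\]
in $L_p(0,\tau;H)$ by $C\norm{u_0}_{(H,\DOMAIN(A(0)))_{1-\nicefrac{1}{p},p}}$, and in fact I would aim for the stronger bound $\norm{D}_{L_p(0,\tau;H)} \le C\norm{u_0}_H$, which is more than enough since $H \hookrightarrow (H,\DOMAIN(A(0)))_{1-1/p,p}$ continuously.

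The main step is a pointwise estimate on $\norm{D(t)}_H$. I would write, via the functional calculus on the Hilbert space $H$ (legitimate because each $A(t)$ is sectorial with uniform angle $\omega_0 = \arctan(M/\alpha)$ by Proposition~\ref{prop:sg-extrapolation}),
\[
   D(t) = \frac{1}{2\pi i}\int_\Gamma z e^{-tz}\bigl[R(z,A(t)) - R(z,A(0))\bigr]\,\dz,
\]
where $\Gamma = \partial\sector{\theta}$ for some fixed $\theta \in (\omega_0, \pihalbe)$. Applying Corollary~\ref{cor:resolvent-diffs} with $\omega(|t-0|) = \omega(t)$ gives $\norm{R(z,A(t))-R(z,A(0))}_{\BOUNDED(H)} \le c_\theta\,\omega(t)/|z|$, so that
\[
   \norm{D(t)}_H \le C\int_0^\infty r e^{-tr\cos\theta}\,\frac{\omega(t)}{r}\,\dr = C\int_0^\infty e^{-tr\cos\theta}\,\dr\;\omega(t) = \frac{C'\,\omega(t)}{t}.
\]
Then $\norm{D}_{L_p(0,\tau;H)}^p \le (C')^p \int_0^\tau \bigl(\omega(t)/t\bigr)^p\,\dt < \infty$ by the $p$--Dini condition \eqref{eq:p-Dini}, and this integral is an absolute constant (not depending on $u_0$), so the contribution of $D$ is bounded by $C\norm{u_0}_H \le C\norm{u_0}_{(H,\DOMAIN(A(0)))_{1-1/p,p}}$.

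Combining the two pieces, $\norm{Ru_0}_{L_p(0,\tau;H)} \le \norm{A(0)e^{-\cdot A(0)}u_0}_{L_p(0,\tau;H)} + \norm{D}_{L_p(0,\tau;H)} \le C\norm{u_0}_{(H,\DOMAIN(A(0)))_{1-1/p,p}}$, which is the claim. The only real obstacle is the justification of the contour-integral representation of $D(t)$ and the interchange of the $\dz$-integral with the (already convergent) resolvent difference estimate; this is routine given the uniform sectoriality from Proposition~\ref{prop:sg-extrapolation}\ref{item:rien} and the decay $|z e^{-tz}| \le |z| e^{-t|z|\cos\theta}$ along $\Gamma$, which makes the integral absolutely convergent in $\BOUNDED(H)$ after inserting the $O(\omega(t)/|z|)$ bound. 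One should also note that the pointwise bound already shows $Ru_0 \in L_p$ for $u_0 \in H$ under the mere $p$--Dini condition; the interpolation space enters only through the autonomous comparison term, which is sharp.
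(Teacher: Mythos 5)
Your proof is correct and follows essentially the same route as the paper: split off $R_0u_0(t)=A(0)e^{-tA(0)}u_0$, bound the difference pointwise by $C\,\omega(t)/t\,\norm{u_0}_H$ via the sectorial functional calculus and the resolvent-difference estimate (you cite Corollary~\ref{cor:resolvent-diffs} directly, while the paper rederives the same bound inline through the forms), then use the $p$--Dini condition and the Triebel characterization of $(H,\DOMAIN(A(0)))_{1-\nicefrac{1}{p},p}$ for the autonomous term. One small slip in the write-up: the embedding you actually need is $(H,\DOMAIN(A(0)))_{1-\nicefrac{1}{p},p}\embeds H$, which gives $\norm{u_0}_H\le C\norm{u_0}_{(H,\DOMAIN(A(0)))_{1-\nicefrac{1}{p},p}}$, not the reverse inclusion as stated.
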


\begin{proof}
  Recall that the operator $R$ is given by $(R g)(t) = A(t) e^{-t \,A(t)} g$ for $g \in H$.  Let
\[
   (R_0 g)(t) := A(0) e^{-t \,  A(0)}g.
\]
We aim to estimate the difference $(R-R_0)g$.  Let $\Gamma = \partial
\sector{\theta}$ with $\theta \in (\om_0, \pihalbe)$ and $\om_0$ is as in the proof of Proposition \ref{prop:sg-extrapolation}.
 Then, for $v \in
V$, the functional calculus for the sectorial operators $A(t)$ and
$A(0)$ gives
\begin{align*}
   & \;  \dprod{ A(t)  e^{-t \,  A(t)} g -  A(0)  e^{-t \,  A(0)}g }{v}\\
 = & \;  \tfrac1{2\upi  i} \int_\Ga  \dprod{ z e^{-t z} \bigl[ R(z, A(t)) - R(z, A(0))\bigr] g }{v} \, \dz\\
 = & \;  \tfrac1{2\upi  i} \int_\Ga  \dprod{ z e^{-t z} R(z, \calA(t)) \bigl[\calA(0)  - \calA(t)\bigr] R(z, A(0))g }{v} \, \dz\\
 = & \;  \tfrac1{2\upi  i} \int_\Ga  \dprod{ z e^{-t z} \bigl[\calA(0)  - \calA(t)\bigr] R(z, A(0))g }{R(z, A(t))^*v} \, \dz\\
 = & \;  \tfrac1{2\upi  i} \int_\Ga z e^{-t z} \form{0}[  R(z, A(0))g ] [ R(z, A(t))^* v ] - \form{t}[  R(z, A(0))g ] [ R(z, A(t))^*v ] \,\dz.
\end{align*}
Now, taking the absolute value it follows from
Proposition~\ref{prop:sg-extrapolation}\ref{item:resol} that
\begin{align*}
 \left|\dprod{ (Rg - R_0 g)(t)}{v} \right|
\le &\;  \tfrac{C_\al}{2\upi } \int_\Ga \omega(t)  |z| e^{-t \,\Re(z)} \norm{ R(z, A(0))g }_V \norm{ R(z, A(t))^*v }_V  \,|\dz|\\
\le &\;  \tfrac{C_{\al, \theta}}{2\upi }  \omega(t) \norm{g}_H \norm{v}_H  \int_\Ga   e^{-t \,\Re z} \,|\dz| \\
\le&\,   C'  \tfrac{\omega(t)}{t}  \norm{g}_H \norm{v}_H.
\end{align*}
Since this true for all $v \in H$ we conclude that 
\begin{equation}\label{RR}
\norm{(R u_0)(t)  - (R_0 u_0)(t)}_H \le C'  \tfrac{\omega(t)}{t}  \norm{u_0}_H.
\end{equation}
From the hypothesis (\ref{eq:p-Dini}) it follows that $Ru_0- R_0 u_0
\in L_p(0, \tau; H)$.  On the other hand, since $A(0)$ is invertible,
it is well-known that $A(0) e^{-t \,A(0)}u_0 \in L_p(0, \tau; H)$ if and
only if $u_0 \in ( H, \DOMAIN(A(0)))_{1-\nicefrac{1}{p},p}$ (see Triebel
\cite[Theorem~1.14]{Triebel:interpolation}). Therefore, $Ru_0 \in
L_p(0, \tau;H)$ and the lemma is proved.
  \end{proof}

\section{Proofs of the main results}\label{sec:main}
\begin{proof}[Proof of Theorem~\ref{thm:main}]
  Assume first that $u_0 = 0$ and let $f \in C_c^\infty(0, \tau; H)$. From
  Lemma~\ref{lem:AT-formula-for-Au} it is clear that
  \begin{equation}    \label{eq:intermediate}
   (I-Q) A(\cdot)u(\cdot) = L f(\cdot).
  \end{equation}
Recall that $L$ is bounded on $L_p(0, \tau; H)$ by Lemma \ref{lemLp}. We shall now prove that $Q$ is bounded on $L_p(0, \tau; H)$. 
Let $g \in C_c^\infty(0, \tau; H)$. By Proposition~\ref{prop:sg-extrapolation} we have
\begin{align*}
 \norm{ (Qg) (t)}_H  
\le  &\; \int_0^t \tfrac{C}{t-s} \norm{ e^{-(t{-}s)  A(t) / 2} (\calA(t){-}\calA(s)) \calA(s)^{-1} g(s)}_{H}   \,\ds \\
\le  &\; \int_0^t \tfrac{C'}{(t{-}s)^{ \nicefrac{3}{2} } } \norm{  (\calA(t){-}\calA(s)) \calA(s)^{-1} g(s)}_{V'}   \,\ds.
\end{align*}
Since $\norm{ \calA(t) x}_{V'} = \sup_{\norm{v}_V=1}
\left|\form{t}[x][v]\right|$, we use the regularity assumption
(\ref{eq:dini-regularity-of-forms}) to bound $Qg$ further by
\begin{equation}\label{truc}
   \norm{ (Qg) (t)}_H    \le  \int_0^t \tfrac{C'}{(t{-}s)^{\nicefrac{3}{2}}} \, \omega(t{-}s) \, \norm{ \calA(s)^{-1} g(s)}_{V}   \,\ds.  
\end{equation}
Now we estimate $\norm{ \calA(s)^{-1} g(s)}_{V}$. By coercivity
\begin{align*}
\al \norm{ \calA(s)^{-1} g(s)}_{V}^2 \le&\; \Re \form{s}[\calA(s)^{-1}g(s)][\calA(s)^{-1}g(s)]\\
=&\; \Re \langle \calA(s) \calA(s)^{-1}g(s), \calA(s)^{-1}g(s) \rangle\\
=&\; \Re \sprod{g(s)}{\calA(s)^{-1}g(s)}\\
\le&\; \norm{g(s)}_H^2 \norm{\calA(s)^{-1}}_{\BOUNDED(H)}.
\end{align*}
We obtain from (\ref{truc}) that 
\begin{equation}\label{est00}
\norm{ (Qg) (t)}_H    \le  \int_0^t \tfrac{C'}{(t{-}s)^{\nicefrac{3}{2}}} \, \omega(t{-}s) \,   \norm{\calA(s)^{-1}}_{\BOUNDED(H)}^{\einhalb}  \norm{g(s)}_H \,\ds. 
\end{equation}
Now, once we replace $A(t)$ by $A(t){+}\mu$, (\ref{truc}) is valid
with a constant independent of $\mu \ge 0$ by
Proposition~\ref{prop:sg-extrapolation}\ref{item:tout}. Using the
estimate
\[
 \norm{(\calA(s) + \mu)^{-1}}_{\BOUNDED(H)} \le \tfrac{1}{\mu},
 \]
 in \eqref{est00} for  $A(s){+}\mu$ we see that
\[
   \norm{ (Qg) (t)}_H \le    \tfrac{C'}{\sqrt{\mu}}  \int_0^t \tfrac{\omega(t{-}s)}{(t{-}s)^{\nicefrac{3}{2}}}  \,   \norm{g(s)}_H \,\ds.
\]
It remains to see that the operator $S$ defined  by 
 \[
   Sh(t) := \int_0^t \tfrac{\omega(t{-}s)}{(t{-}s)^{\nicefrac{3}{2}}}  h(s) \, \ds
\]
is bounded on $L_p(0,\tau; \RR)$. Observe that $S$ is an integral
operator with kernel function $(t,s) \mapsto \eins_{[0,
  t]}(s)\tfrac{\omega(t{-}s)}{(t{-}s)^{\nicefrac{3}{2}}}$. Hence by
assumption (\ref{eq:Dini-3-2-condition}) it is integrable with respect
to each of the two variables with uniform bound with respect to the
other variable.  This implies that $S$ is bounded on $L_1(0, \tau; H)$
and on $L_\infty(0, \tau; H)$ and hence bounded on $L_p(0, \tau;H)$.

\noindent It follows that $Q$ is bounded on $L_p(0, \tau; H)$ with
norm of at most $\tfrac{C''}{\sqrt{\mu}}$ for some constant
$C''$. Taking then $\mu $ large enough makes $Q$ strictly contractive
such that $(I-Q)^{-1}$ is bounded by the Neumann series. Then, for $f
\in C_c^\infty(0, \tau; H)$, (\ref{eq:intermediate}) can be rewritten
as
\[
    A(\cdot)u(\cdot) =  (I-Q)^{-1} L f(\cdot).
\]
For  general $u_0 \in (H, \DOMAIN(A(0)))_{1-\nicefrac1{p}, p}$ we suppose in
addition to (\ref{eq:Dini-3-2-condition}) that (\ref{eq:p-Dini})
holds. Lemma~\ref{lem:operateur-R} shows that $R u_0 \in L_p(0, \tau;
H)$.  As previously we conclude that 
\[
   A(\cdot)u(\cdot) = (I-Q)^{-1} (Lf + R u_0),
\]
whenever $f \in C_c^\infty(0, \tau; H)$. Thus taking the $L_p$ norms we have
\[ 
   \norm{A(\cdot)u(\cdot)}_{L_p(0, \tau;H)} \le C \norm{(Lf + R u_0)}_{L_p(0, \tau;H)}.
\]
We use again the previous estimates on $L$ and $R$ to obtain
\[
    \norm{ A(\cdot) u(\cdot)}_{L_p(0, \tau; H)} \le C' \left[ \norm{f}_{L_p(0, \tau; H)} + \norm{u_0}_{(H, \DOMAIN(A(0)))_{1-\nicefrac1{p}, p}} \right].
\]
Using the equation (\ref{eq:evol-eq}) we obtain a similar  estimate  for $u'$ and so
\[
    \norm{ u'(\cdot)}_{L_p(0, \tau; H)} +  \norm{ A(\cdot) u(\cdot)}_{L_p(0, \tau; H)} \le C'' \left[ \norm{f}_{L_p(0, \tau; H)} + \norm{u_0}_{(H, \DOMAIN(A(0)))_{1-\nicefrac1{p}, p}} \right].
\]
 We write $u(t) = A(t)^{-1} A(t) u(t)$ and use one again the fact that  the norms of $A(t)^{-1}$ on $H$ are uniformly bounded we obtain
\[
   \norm{ u(t) }_{L_p(0, \tau; H)} \le C_1 \norm{ A(\cdot) u(\cdot)}_{L_p(0, \tau; H)} \le C_2 \left[ \norm{f}_{L_p(0, \tau; H)} + \norm{u_0}_{(H, \DOMAIN(A(0)))_{1-\nicefrac1{p}, p}} \right].
\]
 We conclude therefore that the following a priori estimate holds
\begin{align}  \label{eq:etoile}
  & \norm{ u }_{L_p(0, \tau; H)} +  \norm{ u' }_{L_p(0, \tau; H)} +  \norm{ A(\cdot)u(\cdot)  }_{L_p(0, \tau; H)} \nonumber\\
    &\; \le \; C \left[ \norm{f}_{L_p(0, \tau; H)} + \norm{u_0}_{(H, \DOMAIN(A(0)))_{1-\nicefrac1{p}, p}} \right],
\end{align}
where the constant $C$ does not depend on $f \in C_c^\infty(0, \tau; H)$.

 Now let
$f \in L_p(0, \tau; H)$ and $(f_n) \subset C_c^\infty(0, \tau; H)$ be an
approximating sequence that converges in $L_p$ and pointwise almost
everywhere. For each $n$,  denote by $u_n$ the solution of (\ref{eq:evol-eq}) with right hand side $f_n$. 
We apply (\ref{eq:etoile}) to $u_n - u_m$ and we see that 
there exists $u \in W^1_p(0, \tau;
H)$ and $v \in L_p(0, \tau; H)$ such that 
\begin{equation}  \label{eq:les-3}
u_n \overset{L_p}{\relbar\joinrel\relbar\joinrel\rightarrow} u
\qquad
u_n' \overset{L_p}{\relbar\joinrel\relbar\joinrel\rightarrow} u'
\quad\text{and}\quad
A(\cdot)u_n(\cdot) \overset{L_p}{\relbar\joinrel\relbar\joinrel\rightarrow} v
\end{equation}
By extracting a subsequence, we may assume that these limits hold in the pointwise a.e.  sense as well. For a fixed $t$, the operator $A(t)$ is closed and so $v (\cdot) = A(\cdot) u(\cdot)$. 
Passing to the limit in the equation
$$u_n'(t) + A(t)\, u_n(t) = f_n(t)$$
shows that 
$$u'(t) + A(t) u(t) = f(t)$$
for a.e. $t \in (0, \tau)$. On the other hand, by  Sobolev embedding, $(u_n)$ is bounded in $C([0,\tau];H)$ and hence 
$u(0) = u_0$  since $u_n(0) = u_0$ by the definition of $u_n$. 
We conclude that $u$ satisfies 
\[
    u'(t) + A(t)\, u(t) = f(t)\qquad u(0)=u_0
\]
 in the $L_p$ sense. This means
that $u$ is a solution to (\ref{eq:evol-eq}). 
Moreover, (\ref{eq:etoile}) transfers from $u_n$ to $u$. The uniqueness of the solution $u$ follows from the a priori estimate
(\ref{eq:etoile}) as well.
\end{proof}
\begin{proof}[Proof of Corollary~\ref{cor1}]
The result follows from Theorem~\ref{thm:main} and  the observation that 
\[
(H,\DOMAIN(A(0)))_{\nicefrac1{2},2} = [(H,\DOMAIN(A(0))]_\einhalb =  \DOMAIN( (\delta + A(0))^\einhalb),
\]
see e.g. \cite[Corollaries 4.37 and 4.30]{Lunardi:book}.
\end{proof}
\begin{proof}[Proof of Corollary~\ref{coro:main}]
  By the definition of maximal regularity, one may modify the
  operators $A(t), \ 0 \le t \le \tau$, on a set of Lebesgue measure
  zero. Therefore, we may assume without loss of generality that the
  mapping $t \mapsto \form{t}[u][v]$ is right continuous.  We may assume again that the operators $A(\cdot)$ are all invertible.
  We apply Corollary~\ref{cor1} to the evolution equation
\[
\left\{ \begin{aligned} 
u_j'(t) + A(t) u_j(t) &= f(t)\qquad  t \in (t_{j}, t_{j+1})\\
 u_j(t_{j}) &= u_{j-1}(t_{j}),
\end{aligned}
\right.
 \]
 since it is obvious that the assumed $\al$-H\"older continuity for
 some $\al > \einhalb$ implies both (\ref{eq:Dini-3-2-condition}) and
 (\ref{eq:2-Dini}).  The solution $u_{j}$ is in $ W^1_2(t_{j}, t_{j+1}
 ; H)$ provided the initial data satisfies
\[
   u_{j}(t_{j}) := u_{j-1}(t_{j})  \in \DOMAIN(A(t_{j})^\einhalb).
\]
Note that the endpoint $u_{j-1}(t_{j})$ is well defined since $u_j \in
C([t_{j}, t_{j+1}];H)$ by \cite[XVIII Chapter 3, p.\ 513]{DL92}.  In
order to obtain a solution $u \in W^1_2(0, \tau; H)$, we glue the
solutions $u_j$. That is, we set $u(t) = u_j(t)$ for $t \in [t_j,
t_{j+1}]$. What remains then to prove is that $u(t_j) \in
\DOMAIN(A(t_{j})^\einhalb)$, where $u \in W^1_2(0, \tau ; V')$ is the
solution in $V'$ given by Lions' theorem.

Fix one of the discontinuity points $t_j$ and consider the autonomous equation
\[
   v'(s) + A(t_j^-) v(s) = f(s), \qquad v(0)=0.
\]
By maximal regularity of $A(t_j^-)$, its solution $v(s) = \int_0^s
e^{-(s{-}r) A(t_j^-)} \, f(r)\,\dr$ satisfies $v(s) \in
\DOMAIN(A(t_j^-))$ for almost all $s$.  Choose a sequence $(s_n)$
converging to $t_j$ from the left such that $v(s_n) \in
\DOMAIN(A(t_j^-))$.  Since $A(t_j^-)$ is an accretive and sectorial
operator it has a bounded $H^\infty$-calculus of some angle $<
\pihalbe$. Hence $A(t_j^-)$ and its adjoint admit square-function
estimates of the form:
\[
   \int_0^\infty \norm{ A(t_j^-)^\einhalb e^{-r\,A(t_j^-)} x}_H^2\,\dr \le C \norm{x}_H^2 \  \text{for all } x \in H,
\]
see e.g. \cite[Section~8]{McIntosh:H-infty-calc}. It follows that 
\begin{equation}
  \label{eq:square-function}
\begin{split}
 & \; \int_0^{s_n} \bignorm{ A(t_j^-)^\einhalb  e^{-(t_j{-}r) A(t_j^-)} f(r)}_H \,\dr \\
= & \; \int_0^{s_n} \sup_{\norm{h}_H\le 1} \sprod{f(r)}{{A(t_j^-)^*}^\einhalb  e^{-(t_j{-}r) A(t_j^-)^*} h} \,\dr \\
\le & \; \bignorm{f}_{L_2(0,\tau;H)} 
\sup_{\norm{h}_H\le 1}  \Bigl(\int_0^{s_n} \bignorm{ {A(t_j^-)^*}^\einhalb  e^{-(t_j{-}r) A(t_j^-)^*} h}_H^2 \,\dr\Bigr)^\einhalb\\
 \;  \le& \; C \bignorm{f}_{L_2(0,\tau;H)}.
\end{split}
\end{equation}
Thus,  (\ref{eq:square-function}) implies that the
sequence $(v(s_n))_{n\ge 0}$ is bounded in the Hilbert space $\DOMAIN(
A(t_j^-)^\einhalb )$. It has a weakly convergent subsequence. By extracting a subsequence, we may assume that $(v(s_n))_{n\ge 0}$ converges
weakly to some $v$  in $\DOMAIN( A(t_j^-)^\einhalb)$.  But the continuity
of the solution $v(\cdot)$ implies that $v(s_n)$ tends also to
$v(t_j)$ in $H$. Therefore,
\[
  v(t_j) =  \int_0^{t_j} e^{-(t_j{-}r) A(t_j^-)} \, f(r)\,\dr   \in \DOMAIN(A(t_j^-)^\einhalb).
\]
In particular, 
\begin{equation}\label{formule-voulue}
   \int_{t_{j{-}1}}^{t_j} e^{-(t_j{-}r) A(t_j^-)} \, f(r)\,\dr \in \DOMAIN(A(t_j^-)^\einhalb ).
\end{equation}
On the other hand, as  in the proof of Lemma~\ref{lem:AT-formula-for-u},
we have for  all $t> t_{j{-}1}$
\begin{equation}  \label{eq:formule-2}
  \begin{split}
  &   u(t) - e^{-(t{-}t_{j-1})  \calA(t) } u(t_{j-1})   \\
= & \; \int_{t_{j{-}1}}^t e^{-(t{-}s)  \calA(t)} (\calA(t) - \calA(s)) u(s)\,\ds +  \int_{t_{j{-}1}}^t e^{-(t{-}s) \calA(t)} f(s)\,\ds.
  \end{split}
\end{equation}
By analyticity of the semigroup $e^{-s \, \calA(t_j^-) }$ it follows
that $e^{-(t_j{-}t_{j-1})  \,\calA(t_j^-) } u(t_{j-1}) \in
\DOMAIN(A(t_j^-)^\einhalb)$.  Now we prove that $\int_{t_{j{-}1}}^{t_j}
e^{-(t_j {-}s)  \,\calA(t_j^-)} (\calA(t_j^-) - \calA(s)) u(s)\,\ds \in
\DOMAIN(A(t_j^-)^\einhalb)$.  
It is enough to prove that 
\begin{equation}\label{eqHH}
 \int_{t_{j{-}1}}^{t_j} A(t_j^-)^\einhalb e^{-(t_j{-}s) \calA(t_j^-)} (\calA(t_j^-) - \calA(s)) u(s)\,\ds \in H.
\end{equation}
To this end, let $h \in H$ be such that $\norm{h}_H \le 1$. By Proposition \ref{prop:sg-extrapolation}, (c),  we have 
\begin{equation}\label{eqVA}
 \norm{{A(t_j^-)^*}^\einhalb  e^{-(t_j {-} s) A(t_j^-)^*} h}_V \le C | t_j - s|^{-1}
 \end{equation}
Thus, since the form is $C^\alpha$ on $(t_{j-1}, t_j)$, we have for every  small $\epsilon > 0$
\begin{align*}
    & \Bigl| \langle \int_{t_{j-1}}^{t_j -\epsilon}  A(t_j^-)^\einhalb e^{-(t_j{-}s) \calA(t_j^-)} (\calA(t_j^-) - \calA(s)) u(s)\, \ds, h \rangle \Bigr|\\
=   & \; \Bigl| \int_{t_{j-1}}^{t_j -\epsilon}\!\! \form{t_j}[u(s)][{A(t_j^-)^*}^\einhalb  e^{-(t_j {-} s) A(t_j^-)^*} h] - \form{s}[u(s)][{A(t_j^-)^*}^\einhalb  e^{-(t_j {-} s) A(t_j^-)^*} h]  \,\ds \Bigr|\\
\le & \; C \int_{t_{j-1}}^{t_j -\epsilon} | t_j - s|^{\alpha} \norm{u(s)}_V \norm{{A(t_j^-)^*}^\einhalb  e^{-(t_j {-} s) A(t_j^-)^*} h}_V  \,\ds\\
\le & \; C' \int_{t_{j-1}}^{t_j} | t_j - s|^{\alpha-1} \norm{u(s)}_V\,\ds.
\end{align*}
Taking the supremum over all $h \in H$ of norm we obtain
\[  \norm{ \int_{t_{j-1}}^{t_j -\epsilon}  A(t_j^-)^\einhalb e^{-(t_j{-}s) \calA(t_j^-)} (\calA(t_j^-) - \calA(s)) u(s)\, \ds}_H \le 
C'' \norm{u}_{L_2(0, \tau;V)}.
\]
Since this is true for all  $\epsilon > 0$ we obtain
\eqref{eqHH}. We conclude from this, \eqref{formule-voulue} and
\eqref{eq:formule-2} that $u_{j-1}(t_j) = u(t_j)  \in \DOMAIN( A(t_j^-)^\einhalb)$. Finally, the latter space coincides 
with $\DOMAIN( A(t_j^-)^\einhalb) = \DOMAIN( A(t_j^+)^\einhalb)$ by the assumptions of the corollary.  
\end{proof}

\section{Operator-valued pseudo-differential operators}\label{sec-pseudo}
Given a Hilbert space $H$, our aim in this section is to prove results
on boundedness on $L_2(\RR^n ; H)$ for pseudo-differential operators
with minimal smoothness assumption on the symbol. The main results we
will show here were proved in \cite{MuNa81} in the scalar case
(i.e. $H {=} \CC$), see also \cite{ANV03}. The operator-valued version
follows the lines in \cite{MuNa81} and we  give the details here
for the sake of completeness. Let us mention the paper
\cite{HytonenPortal:pseudo} where results on $L_p$--boundedness of
pseudo-differential operators with operator-valued symbols are proved
even when $H$ is not a Hilbert space. We do not appeal to the results
from \cite{HytonenPortal:pseudo} in order to avoid assuming continuity
and concavity assumptions on the function $\om$ in Theorem
\ref{thm:main}.

Let $H$ be a Hilbert space on $\CC$, with scalar product
$\sprod{\cdot}{\cdot}$ and associated norm $\norm{\cdot}_H$.
 \[
   \sigma: \RR^n \times \RR^n \to \BOUNDED(H)
\]
be bounded measurable. We define for $f $ in the Schwartz space $\SCHWARTZ(\RR^n; H)$
\[
    T_\sigma f(x) := \tfrac1{(2\pi)^n} \int_{\RR^n} \sigma(x, \xi) \widehat{f}(\xi) e^{i x \xi}\ \dxi.
\]
where we write $\widehat f$ for the Fourier transform of $f$.  We
shall also use the notation $|\xi|$ for the Euclidean distance in
$\RR^n$ and write henceforth $\langle\xi\rangle := \sqrt{1 +
  |\xi|^2}$.  For the rest of this section, we will ignore the
normalisation constant in the definition of the Fourier transform.

\begin{theorem}\label{thmpseudo}
Suppose that there exists a non-decreasing function  $\omega: [0, \infty) \to [0, \infty)$ such that 
\[ 
\norm{\partial_\xi^\al \sigma(x, \xi)}_{\BOUNDED(H)} \le C_\al \langle\xi\rangle^{-|\al |}
\]
and
\[
\norm{\partial_\xi^\al \sigma(x, \xi) - \partial_\xi^\al \sigma(x', \xi)}_{\BOUNDED(H)} \le C_\al \langle\xi \rangle^{-| \al | } \omega(| x-x'|)
\]
for all $| \al | \le [\nicefrac{n}{2}] + 2$ and some positive  constant $C_\al$.  Suppose in addition that 
\[
\int_0^1 \omega(t)^2 \, \tfrac{\dt}t < \infty,
\]
then $T_\sigma$ is a bounded operator on $ L_2(\RR^n ; H)$.\footnote{In
  \cite{HM99}, $L_2(\RR; H)$--boundedness of $T_\sigma$ is claimed for
  symbols $\sigma: \RR {\times} \RR \to \BOUNDED(H)$
  that admit a bounded holomorphic extension to a double sector of
  $\CC$ in the variable $\xi$, without  any kind of
  regularity in the variable $x$. }
\end{theorem}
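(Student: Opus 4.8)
The aim is to prove the a priori bound $\norm{T_\sigma f}_{L_2(\RR^n;H)}\le C\norm f_{L_2(\RR^n;H)}$ for $f\in\SCHWARTZ(\RR^n;H)$; the conclusion then follows by density. I would follow the scheme of \cite{MuNa81}, substituting Hilbert-space-valued Fourier estimates for the scalar ones. First I would fix a Littlewood--Paley partition of unity $1=\sum_{j\ge0}\varphi_j(\xi)$ with $\varphi_j$ supported in $\{|\xi|\sim2^j\}$ for $j\ge1$ and in a ball for $j=0$, and set $\sigma_j(x,\xi):=\sigma(x,\xi)\varphi_j(\xi)$, so that $T_\sigma=\sum_{j\ge0}T_{\sigma_j}$ on $\SCHWARTZ(\RR^n;H)$; the hypotheses give $\norm{\partial_\xi^\al\sigma_j(x,\xi)}_{\BOUNDED(H)}\le C_\al2^{-j|\al|}$ and $\norm{\partial_\xi^\al\sigma_j(x,\xi)-\partial_\xi^\al\sigma_j(x',\xi)}_{\BOUNDED(H)}\le C_\al2^{-j|\al|}\omega(|x-x'|)$ for $|\al|\le[\nicefrac n2]+2$, uniformly in $j$. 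Next I would decompose each $\sigma_j$ in the \emph{spatial} variable by a second Littlewood--Paley family $(\Lambda_k)_{k\ge0}$ acting on $x$, writing $\sigma_{j,k}:=\Lambda_k\sigma_j$; since $x\mapsto\partial_\xi^\al\sigma_j(x,\xi)$ has modulus of continuity $\le C_\al2^{-j|\al|}\omega(\cdot)$ and a modulus of continuity is subadditive (so that no regularity of $\omega$ itself is needed), the elementary bound $\norm{\Lambda_kg}_\infty\le C\omega_g(2^{-k})$ yields
\[
 \norm{\partial_\xi^\al\sigma_{j,k}(x,\xi)}_{\BOUNDED(H)}\le C_\al\,2^{-j|\al|}\,\omega_k,\qquad \omega_k:=\min\{1,\omega(2^{-k})\},\quad|\al|\le[\nicefrac n2]+2,
\]
and $\int_0^1\omega(t)^2\,\tfrac{\dt}{t}<\infty$ is equivalent to $\Theta^2:=\sum_{k\ge0}\omega_k^2<\infty$. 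Fixing a large integer $N_0$, I split $T_\sigma=T_{\sigma^\flat}+T_{\sigma^\sharp}$, where $\sigma^\flat$ collects, for each $j$, the spatial frequencies $\lesssim2^j$ of $\sigma_j$ (so $\sigma^\flat=\sum_j\Lambda_{\le j+N_0}\sigma_j$) and $\sigma^\sharp=\sum_j\sum_{k>j+N_0}\sigma_{j,k}$ the remaining ones.

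The technical core, which I expect to be the main obstacle, is a \emph{basic $L_2$-estimate} for operator-valued symbols that are frequency-localized in $\xi$: if $\rho(x,\xi)$ is supported in $\{|\xi|\lesssim R\}$ and satisfies $\norm{\partial_\xi^\al\rho(x,\xi)}_{\BOUNDED(H)}\le A\langle\xi\rangle^{-|\al|}$ for all $|\al|\le[\nicefrac n2]+2$, then $T_\rho$ is bounded on $L_2(\RR^n;H)$ with $\norm{T_\rho}\le C_nA$ (and, when moreover the spatial frequencies of $\rho$ are concentrated at a scale $\gg R$, the output of $T_\rho$ inherits that scale). The proof reduces by scaling to a fixed frequency region, writes $T_\rho$ as an integral operator with $\BOUNDED(H)$-valued kernel $K(x,y)=\int\rho(x,\xi)e^{i(x-y)\xi}\,\dxi$, and bounds $\int\norm{K(x,y)}_{\BOUNDED(H)}\,\dy$ and $\int\norm{K(x,y)}_{\BOUNDED(H)}\,\dx$: integration by parts in $\xi$ together with Plancherel's theorem in $\xi$ control $\int\norm{K(x,y)}_{\BOUNDED(H)}^2(1+|x-y|)^{2M}\,\dy$ by $C\sum_{|\al|\le M}\int\norm{\partial_\xi^\al\rho(x,\xi)}_{\BOUNDED(H)}^2\,\dxi\le C'A^2$ with $M=[\nicefrac n2]+2$, whence the $L_1$-bounds follow by Cauchy--Schwarz since $2M>n$, and Schur's test for $\BOUNDED(H)$-valued kernels concludes. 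This step uses the Hilbert-space structure of $H$ in an essential way (through Plancherel), which is why $[\nicefrac n2]+2$ derivatives in $\xi$ suffice; the delicate point is that $\sigma$ does not decay in $x$, so that $\widehat\rho(\cdot,\xi)$ is only a tempered distribution, which is why the preliminary spatial-frequency localization is needed.

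Finally I would reassemble by almost-orthogonality. For the ``low'' part, each operator $T_{\Lambda_{\le j+N_0}\sigma_j}$ has both its input and its output frequencies in $\{|\xi|\sim2^j\}$, so these operators are mutually almost orthogonal; since each is $L_2$-bounded with a bound depending only on $\sup_{x,\xi}\norm{\sigma(x,\xi)}_{\BOUNDED(H)}$ by the basic estimate, $T_{\sigma^\flat}$ is bounded. For the ``high'' part, I would group by output scale: $T_{\sigma^\sharp}=\sum_k T_{\rho_k}$ with $\rho_k:=\sum_{j<k-N_0}\sigma_{j,k}$, which has spatial frequencies $\sim2^k$ and $\xi$-support in $\{|\xi|\lesssim2^{k-N_0}\}$; by the basic estimate, $T_{\rho_k}f$ is localized to $\{|\xi|\sim2^k\}$ and $\norm{T_{\rho_k}f}_{L_2}\le C\omega_k\norm{S_kf}_{L_2}$, where $S_kf$ denotes the part of $f$ with frequencies $\lesssim2^{k-N_0}$. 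Since the functions $T_{\rho_k}f$ have almost disjoint frequency supports, Plancherel gives
\begin{align*}
 \Bignorm{\sum_k T_{\rho_k}f}_{L_2}^2 &\le C\sum_k\norm{T_{\rho_k}f}_{L_2}^2\le C\sum_k\omega_k^2\norm{S_kf}_{L_2}^2 = C\sum_k\omega_k^2\sum_{j<k-N_0}\norm{\Delta_jf}_{L_2}^2\\
 &= C\sum_j\norm{\Delta_jf}_{L_2}^2\sum_{k>j+N_0}\omega_k^2\le C\,\Theta^2\sum_j\norm{\Delta_jf}_{L_2}^2\le C'\,\Theta^2\,\norm f_{L_2}^2,
\end{align*}
and the series converges exactly by the $L_2$-Dini hypothesis. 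Adding the two contributions proves the theorem. The essential feature of this last step --- and the reason $\int_0^1\omega(t)^2\,\tfrac{\dt}{t}<\infty$ is the right condition --- is that the spatial frequency scales enter in $\ell^2$, through the output-orthogonality of the blocks $\{|\xi|\sim2^k\}$ together with the Plancherel identity for $S_kf$, rather than in $\ell^1$, the latter being false in general for merely Dini-continuous symbols.
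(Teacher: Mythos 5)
Your overall architecture (separate the part of $\sigma$ whose spatial frequencies are small compared with $|\xi|$ from the rest, and pay for the rough part with the $L_2$--Dini condition in $\ell^2$ over dyadic scales) is the right one, and is indeed the idea behind Muramatu--Nagase, which the paper follows. But your choice of threshold --- spatial frequencies up to $2^{j+N_0}$, i.e.\ \emph{comparable} to the $\xi$-frequency --- puts both halves of your decomposition beyond the reach of the arguments you invoke. For the low part: the output frequency of $T_{\Lambda_k\sigma_j}\Delta_jf$ lives in the sumset $\{|\eta|\sim 2^k\}+\{|\xi|\sim 2^j\}$, which for $k$ up to $j+N_0$ is a \emph{ball} of radius $\sim 2^{j+N_0}$, not an annulus: spatial and input frequencies can cancel. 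So the blocks $T_{\Lambda_{\le j+N_0}\sigma_j}$ are input-orthogonal but not output-orthogonal, Cotlar--Stein does not apply (it needs decay of both $T_j^*T_{j'}$ and $T_jT_{j'}^*$), and input orthogonality alone only yields the $\ell^1$ bound $\sum_j\norm{\Delta_jf}_{L_2}$. This is not a reparable detail: $\sigma^\flat$ satisfies exactly the estimates of the exotic class $S^0_{1,1}$ ($x$-derivatives cost $\langle\xi\rangle$, $\xi$-derivatives gain $\langle\xi\rangle^{-1}$), for which $L_2$-boundedness is classically false. For the high part, your ``basic estimate'' $\norm{T_\rho}\le C_nA$ is correct for symbols supported in an \emph{annulus} $|\xi|\sim R$ (there $\langle\xi\rangle^{-|\al|}\sim R^{-|\al|}$ and the rescaled weighted-Plancherel/Schur argument closes), but $\rho_k$ is supported in the ball $|\xi|\lesssim 2^{k-N_0}$ with only $\langle\xi\rangle^{-|\al|}$ decay; the same computation then produces $\int_{|\xi|\le R}\langle\xi\rangle^{-2|\al|}\,\dxi$, which is of order $R^n$ for $\al=0$ but only $O(1)$ for $|\al|>\nicefrac{n}{2}$, so no choice of weight makes the Schur bound uniform in $R$. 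Re-decomposing $\rho_k=\sum_{j<k-N_0}\sigma_{j,k}$ does not help: at fixed $k$ all these pieces output into the same annulus $|\xi|\sim 2^k$, so one is back to an $\ell^1$ sum over $j$, which costs a factor $\sqrt{k}$ and would require $\sum_k k\,\omega(2^{-k})^2<\infty$, strictly stronger than the hypothesis.

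The paper circumvents both problems at once by smoothing $\sigma$ in $x$ at the scale $\langle\xi\rangle^{-\delta}$ with $\delta<1$ strictly (your threshold corresponds to $\delta=1$). The smooth part then lies in $S^0_{1,\delta}$ with $\delta<1$, whose $L_2$-boundedness is a Calder\'on--Vaillancourt-type theorem that has to be imported --- the operator-valued version is Theorem~1 of \cite{Mu73} --- and is not obtainable by orthogonality alone; this external input is missing from your proof. The remainder then carries a gain $\omega(\langle\xi\rangle^{-\delta})$ that \emph{depends on the frequency} $\xi$, which changes the bookkeeping of the summation: in Theorem~\ref{thm:MuramatoNagase-2.2} a single (continuous) Littlewood--Paley decomposition in $\xi$ suffices, and the sum over scales is closed by Cauchy--Schwarz in the scale parameter $t$ --- one factor produces $\int_0^1\omega(t^{\delta})^2\,\tfrac{\dt}{t}<\infty$, the other the square-function identity $\int_0^\infty|\phi(t|\xi|)|^2\,\tfrac{\dt}{t}=1$ applied to $\widehat f$ --- with no output orthogonality and no spatial frequency localization needed. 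The choice $\delta<1$ costs nothing in the Dini condition since $\int_0^1\omega(t^{\delta})^2\,\tfrac{\dt}{t}=\delta^{-1}\int_0^1\omega(s)^2\,\tfrac{\ds}{s}$. If you wish to keep your discrete double Littlewood--Paley framework, cutting the spatial frequencies at $2^{\delta j}$ instead of $2^{j+N_0}$ and invoking an operator-valued $S^0_{1,\delta}$ theorem for the low part would realign it with the paper's proof.
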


\begin{proof} Let $\varphi \in C^\infty(\RR^n)$ be a non-negative
  function with support in the unit ball such that $\int_{\RR^n}
  \varphi(x) \,\dx = 1$.  Fix a constant $\delta \in (0,1)$ and define
  the symbols
\[
   \sigma_1(x, \xi) := \int_{\RR^n} \varphi(y) \, \sigma(x - \frac{y}{\langle\xi \rangle^{\delta}},  \xi) \, \dy
\]
and
\[
   \sigma_2(x, \xi) := \sigma(x, \xi) - \sigma_1(x, \xi). 
\]
It is clear that 
\[
    \sigma_1(x, \xi) = \int_{\RR^n} \varphi(\langle\xi\rangle^\delta (x-y)) \, \sigma(y,\xi)\langle\xi\rangle^{n \delta} \,\dy
\]
and one checks that 
\begin{equation}\label{MN00}
 \norm{ \partial_x^\beta \partial_{\xi}^\al \sigma_1(x, \xi)  }_{\BOUNDED(H)} 
 \le c_{\al, \beta}  \langle \xi\rangle^{- |\al | + \delta | \beta |} \le c_{\al, \beta} \langle \xi\rangle^{-\delta(|\al | - | \beta |)}
 \end{equation}
 and
 \begin{equation}\label{MN01}
 \norm{\partial_\xi^\al \sigma_2(x,\xi)  }_{\BOUNDED(H)} \le c_\al \omega(\langle \xi\rangle^{-\delta}) \langle \xi\rangle^{-|\al|}
 \end{equation}
 for $|\al | \le [\frac{n}{2}] + 2$ and all $\beta$.  Using
 \eqref{MN01} we conclude by the next theorem that $T_{\sigma_2}$ is
 bounded on $L_2(\RR^n; H)$. The boundedness of $T_{\sigma_1}$ on
 $L_2(\RR^n; H)$ follows from \eqref{MN00} and Theorem 1 in
 \cite{Mu73}. Note that it is assumed there that the symbol is
 $C^\infty$ but the estimate needed in Theorem 1 is exactly
 \eqref{MN00} with $|\al | \le [\frac{n}{2}] + 2$.
\end{proof} 
\begin{theorem}\label{thm:MuramatoNagase-2.2}
  Let $\delta \in (0, 1)$ and $w: [0, 1]\to\RR_+$ be a non-decreasing
  measurable function satisfying
\[
   \int_0^1   \omega(t)^2\,\tfrac{\dt}t < \infty.
\]
If a bounded strongly measurable symbol $\sigma: \RR^n \times \RR^n
\to \BOUNDED(H)$ satisfies
\begin{equation}  \label{eq:cond2.8}
  \norm{  \partial_\xi^\al \sigma(x, \xi) }_{\BOUNDED(H)}
\le
C_\al \bracket{\xi}^{-\alpha} \, \omega( \bracket{\xi}^{-\delta}) 
\end{equation}
for $|\al| \le \kappa := [\nicefrac{n}2]+1$, then $T_\sigma$ is bounded on $L_2(\RR^n; H
)$.
\end{theorem}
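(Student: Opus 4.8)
The plan is to follow the Littlewood--Paley argument of Muramatu and Nagase \cite{MuNa81} (see also \cite{ANV03}), the point being that its scalar version uses only Plancherel's theorem and elementary norm inequalities, both of which persist for $\BOUNDED(H)$-valued amplitudes acting on $H$-valued functions.

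First I would fix a dyadic partition of unity $1=\sum_{\nu\ge0}\varphi_\nu$ in the frequency variable, with $\varphi_\nu$ supported in $\{|\xi|\sim2^\nu\}$ for $\nu\ge1$, and set $\sigma^\nu(x,\xi):=\sigma(x,\xi)\varphi_\nu(\xi)$, $T_\nu:=T_{\sigma^\nu}$. Then $T_\sigma=\sum_{\nu\ge0}T_\nu$ on $\SCHWARTZ(\RR^n;H)$, $T_\nu=T_\nu\widetilde P_\nu$ where $\widetilde P_\nu$ is the Fourier projection onto a slightly enlarged annulus, and \eqref{eq:cond2.8} gives $\norm{\partial_\xi^\al\sigma^\nu(x,\xi)}_{\BOUNDED(H)}\lesssim2^{-\nu|\al|}\,\omega(2^{-\delta\nu})$ for $|\al|\le\kappa$. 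The core of the argument is the single-block estimate
\[
  \norm{T_\nu}_{\BOUNDED(L_2(\RR^n;H))}\ \le\ C\,\omega(2^{-\delta\nu})\qquad(\nu\ge0),
\]
with $C$ independent of $\nu$. To prove it I would split the annulus $\{|\xi|\sim2^\nu\}$ into cubes $Q$ at a scale $\sim\bracket{\xi}^{\delta}$ coupled to the Dini exponent $\delta$ --- the scale on which the $\xi$-derivative bounds and the modulus $\omega(\bracket{\xi}^{-\delta})$ balance --- take a partition of unity $\varphi_\nu=\sum_Q\chi_Q$ subordinate to it, and expand each $\sigma^\nu(x,\cdot)\,\chi_Q$ in a Fourier series in $\xi$ over a cube of that size. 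A $\kappa$--fold integration by parts in the rescaled frequency variable produces coefficients $\beta_{Q,m}(x)\in\BOUNDED(H)$ with $\norm{\beta_{Q,m}(x)}_{\BOUNDED(H)}\lesssim\omega(2^{-\delta\nu})(1+|m|)^{-\kappa}$, so that
\[
  T_\nu f=\sum_{Q,m}\beta_{Q,m}(\cdot)\,\bigl(\widetilde\chi_Q(D)f\bigr)(\cdot+c_{Q,m}),
\]
a superposition of translates of the frequency-localised pieces $\widetilde\chi_Q(D)f$, each multiplied by the operator-valued amplitude $\beta_{Q,m}(\cdot)$. One sums this series using: the bounded overlap of the cubes $Q$ (i.e.\ the $H$-valued Plancherel theorem applied to the $\widetilde\chi_Q(D)f$); the vector-valued Cauchy--Schwarz inequality $\norm{\sum_Q A_Q v_Q}_H\le(\sum_Q\norm{A_Q}_{\BOUNDED(H)}^2)^{1/2}(\sum_Q\norm{v_Q}_H^2)^{1/2}$ in place of scalar orthogonality; and an almost-orthogonality (Cotlar--Stein type) argument in the index $m$ to absorb the fact that $(1+|m|)^{-\kappa}$ is only borderline non-summable --- it is precisely here that $\kappa=[\nicefrac{n}{2}]+1$ is used, exactly as in the Calder\'on--Vaillancourt theorem.

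Granting the block estimate, I would conclude by duality. For $f,g\in\SCHWARTZ(\RR^n;H)$, writing $T_\sigma=\sum_\nu T_\nu\widetilde P_\nu$,
\[
  \bigl|\dprod{T_\sigma f}{g}\bigr|=\Bigl|\sum_\nu\dprod{\widetilde P_\nu f}{T_\nu^* g}\Bigr|
  \le\Bigl(\sum_\nu\norm{\widetilde P_\nu f}_{L_2(\RR^n;H)}^2\Bigr)^{1/2}\Bigl(\sum_\nu\norm{T_\nu^* g}_{L_2(\RR^n;H)}^2\Bigr)^{1/2}.
\]
The first factor is $\lesssim\norm{f}_{L_2(\RR^n;H)}$ by Plancherel and bounded overlap; in the second, $\norm{T_\nu^* g}_{L_2}\le\norm{T_\nu}\,\norm{g}_{L_2}\lesssim\omega(2^{-\delta\nu})\norm{g}_{L_2}$, and since $\omega$ is non-decreasing, $\sum_{\nu\ge0}\omega(2^{-\delta\nu})^2<\infty$ by comparison with $\int_0^1\omega(t)^2\,\tfrac{\dt}{t}$. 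Hence $|\dprod{T_\sigma f}{g}|\le C\,\norm{f}_{L_2}\norm{g}_{L_2}$, and $L_2$-boundedness of $T_\sigma$ follows by density.

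The hard part is the single-block estimate. Since no regularity in $x$ is assumed, the only orthogonality available there is the bounded overlap of the frequency cubes, so the almost-orthogonal decomposition must be organised so that neither the number of cubes (which grows with $\nu$) nor the borderline decay of the Fourier coefficients causes a loss; extracting exactly the power $\omega(2^{-\delta\nu})$ --- and no worse --- is what forces the cube scale to be coupled to $\delta$ and what makes $[\nicefrac{n}{2}]+1$ derivatives both necessary and sufficient. Relative to \cite{MuNa81}, the only genuinely new ingredient is bookkeeping: wherever the scalar argument invokes Cauchy--Schwarz with scalar coefficients one substitutes its $\BOUNDED(H)$-valued form above, which is harmless.
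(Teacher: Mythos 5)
Your overall architecture --- dyadic Littlewood--Paley blocks $T_\nu$, a single-block bound $\norm{T_\nu}_{\BOUNDED(L_2(\RR^n;H))}\le C\,\omega(2^{-\delta\nu})$, and summation over $\nu$ using $\sum_\nu\omega(2^{-\delta\nu})^2\lesssim\int_0^1\omega(t)^2\,\tfrac{\dt}{t}$ --- is essentially the paper's, which uses the continuous decomposition $\int_0^\infty|\phi(t|\xi|)|^2\,\tfrac{\dt}{t}=1$ in place of a dyadic one, and the block estimate you state is indeed true. The gap is in your proof of that block estimate. After expanding $\sigma^\nu(x,\cdot)\chi_Q$ in a Fourier series you retain only the sup-norm decay $\norm{\beta_{Q,m}(x)}_{\BOUNDED(H)}\lesssim\omega(2^{-\delta\nu})(1+|m|)^{-\kappa}$, which is not summable over $m\in\ZZ^n$ since $\kappa=[\nicefrac{n}{2}]+1\le n$, and you propose to recover summability by a Cotlar--Stein argument in $m$. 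That cannot work here: writing $T_{Q,m}f=\beta_{Q,m}(\cdot)\,(\widetilde\chi_Q(D)f)(\cdot+c_{Q,m})$, both $T_{Q,m}^*T_{Q',m'}$ and $T_{Q,m}T_{Q',m'}^*$ contain the product of two coefficient functions that are merely bounded and measurable in $x$, so neither composition gains any decay in $|m-m'|$ or in the separation of $Q$ and $Q'$. The analogy with Calder\'on--Vaillancourt is misleading precisely because there the $x$-derivative bounds are what make $T^*T$ almost diagonal; here the symbol has no $x$-regularity at all, and the products $\beta_{Q,m}g_{Q,m}$ are not almost orthogonal.

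The repair is to use the $\ell^2$ information on the coefficients instead of their $\ell^\infty$ decay: Parseval on the cube gives $\sum_m(1+|m|^2)^{\kappa}\norm{\beta_{Q,m}(x)^*h}_H^2\lesssim\omega(2^{-\delta\nu})^2\norm{h}_H^2$ \emph{pointwise in $x$}, and then a Cauchy--Schwarz in $(Q,m)$ against the weight $(1+|m|^2)^{-\kappa}$ --- summable exactly because $2\kappa>n$, which is the true role of $[\nicefrac{n}{2}]+1$ here, not any almost-orthogonality --- combined with $\sum_Q\norm{\widetilde\chi_Q(D)f}_{L_2}^2\lesssim\norm{f}_{L_2}^2$ closes the estimate. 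This is precisely the mechanism of the paper's proof in continuous form: the pairing $\sprod{T_{\sigma_2}f(x)}{h}$ is written as $\int_0^1\int\sprod{K_1(t,x,z)}{K_2(t,x,z)}\,\dz\,\tfrac{\dt}{t}$ with $K_1$ depending only on $f$ and $K_2$ only on $\sigma$ and $h$; Cauchy--Schwarz against $\bracket{z}^{\pm\kappa}$ and Plancherel in $z$ (the continuous analogue of Parseval in $m$) bound the $K_2$ factor by $\omega(t^\delta)^2\norm{h}^2$ pointwise in $x$, quarantining the roughness in $x$ entirely. Two smaller remarks: there is no reason to couple the cube scale to $\delta$ (the natural scale is $2^\nu$, dictated by the derivative bounds $\langle\xi\rangle^{-|\al|}$; the exponent $\delta$ enters only through the factor $\omega(\bracket{\xi}^{-\delta})$), and the low-frequency block must be handled separately as the paper does with its cutoff $\varphi$, though your $\nu=0$ term accomplishes this.
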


\begin{proof}
  Let $\varphi$ be a non-negative $C_c^\infty$ function satisfying 
  $\varphi(\xi) = 1$ for $|\xi| \le  2$ and $\varphi(\xi) = 0$ for $|\xi|>3$.
  Then we may rewrite 
\[
   \sigma(x, \xi) = \varphi(\xi) \sigma(x, \xi) 
             \; + \; (1{-}\varphi(\xi)) \sigma(x, \xi)
                  = \sigma_1(x, \xi) + \sigma_2(x, \xi)
\]
and treat both parts separately. For the first part, let $f \in \SCHWARTZ(\RR^n ; H)$ and $h \in H$. Then
 \begin{align*}
 \sprod{ (T_{\sigma_1} f)(x) }{h}
 =  & \; \int_{\RR^n} e^{ix\cdot \xi} \sprod{ \sigma_1(x, \xi)  \widehat f(\xi)}{h}\,\dxi \\
 =  & \; \int_{\RR^n} \sprod{ f(y) }{\int_{\RR^n} e^{i(x-y)\cdot \xi}  \sigma_1(x, \xi)^*h \,\dxi}\,\dy\\
 =: & \; \int_{\RR^n} \sprod{ f(y) }{ K(x, x{-}y) }\,\dy.
\end{align*}
By Plancherel's theorem,
\begin{align*}
\int_{\RR^n} \norm{\bracket{z}^{2\kappa} K(x, z) }^2\,\dz 
\le & \; 
\sum_{|\al|\le 2\kappa} c_\al \int_{\RR^n} \norm{ z^\al K(x, z) }^2\,\dz \\
= & \; 
\sum_{|\al|\le 2\kappa} c_\al \int_{\RR^n} \norm{ \partial_\xi^\al \sigma_1(x, \xi)^* h }^2\,\dz 
=: C_1 \norm{h}^2, 
\end{align*}
where $C_1$ is finite due to the  support of $\sigma_1$.
By the Cauchy-Schwarz inequality, 
\begin{align*}
   &   \; \norm{ T_{\sigma_1} f }_{L_2(\RR^n; H) }^2 \\
  = &  \; \int_{\RR^n} \sup_{h \in H, \norm{h}\le 1} \Bigl|\int_{\RR^n} \sprod{f(y)}{K(x, x{-}y)}\,\dy  \Bigr|^2 \,\dx\\
\le &  \; \int_{\RR^n} \sup_{h \in H, \norm{h}\le 1} \Bigl( \int_{\RR^n} \bracket{x{-}y}^{-2\kappa} \norm{f(y)}_H^2 \, \dy\Bigr) 
                     \Bigl( \int_{\RR^n}  \norm{\bracket{x{-}y}^{2\kappa} K(x, x{-}y) }_H^2  \,\dy \Bigr)\,\dx\\
\le & \; C_1 \int_{\RR^n} \int_{\RR^n}  \bracket{x{-}y}^{-2\kappa} \,\dx \;\norm{ f(y) }_H^2 \,\dy = C_1 C_2 \norm{f}_{L_2(\RR^n; H)}^2.
\end{align*}
Thus, $T_{\sigma_1}$ is bounded on $L_2(\RR^n; H)$. 

\medskip

\noindent Next we  show boundedness of $T_{\sigma_2}$. Recall that $\supp(\sigma_2) \subset \{ (x, \xi) \SUCHTHAT |\xi| \ge 2 \}$. Let
$\phi \in C_c^\infty$ such that $\supp(\phi) \subseteq [1,2]$ and $\int_0^\infty |\phi(t)|^2\,\tfrac{\dt}t =
1$.   Let $f \in \SCHWARTZ(\RR^n; H)$ and  $h \in H$. 
\begin{align*}
 \sprod{ (T_{\sigma_2} f)(x)}{h}
= & \; \int_{\RR^n} e^{ix\cdot \xi} \sprod{\sigma_2(x, \xi) \widehat f(\xi)}{h}\,\dxi \\
= & \; \int_0^\infty \int_{\RR^n} e^{ix\cdot \xi} \phi(|t \xi|)^2  \sprod{\widehat f(\xi)}{\sigma_2(x, \xi)^* h} \,\dxi \,\tfrac{\dt}t\\
= & \;  \int_0^{1} \int_{\RR^n}   \sprod{e^{ix\cdot \xi} \phi(|t \xi|) \widehat f(\xi)}{\phi(|t \xi|)\, \sigma_2(x, \xi)^* h}  \,\dxi\, \tfrac{\dt}t \\
= & \;  \int_0^{1} \int_{\RR^n}   \sprod{t^{-n} e^{i \nicefrac{x}{t} \cdot \xi} \phi(|\xi|) \widehat f(\nicefrac{\xi}t)}{\phi(|\xi|)\, \sigma_2(x, \nicefrac{\xi}t)^* h}  \,\dxi\, \tfrac{\dt}t.
\end{align*}
Recall that $\phi \in \SCHWARTZ(\RR)$, so that by Plancherel's theorem
\[
 \sprod{ (T_{\sigma_2} f)(x)}{h}
= 
 \int_0^{1} \int_{\RR^n}   \sprod{ K_1(t, x, z)}{ K_2(t, x, z)}  \,\dz\, \tfrac{\dt}t, 
\]
where $K_1$ and $K_2$ are the respective Fourier transforms
\[
   K_1(t, x, z) = \int_{\RR^n} t^{-n} e^{i( \nicefrac{x}{t}-z)\cdot \xi}  \phi(|\xi|) \widehat f(\nicefrac{\xi}t)\,\dxi 
= \int_{\RR^n} e^{i(x-tz)\cdot \xi}  \phi(|t\xi|) \widehat f(\xi)\,\dxi
\]
and
\[
   K_2(t, x, z) = \int_{\RR^n} e^{-iz \cdot \xi} \phi(|\xi|)\, \sigma_2(x, \nicefrac{\xi}t)^* h \,\dxi.
\]

\noindent Now, by the Cauchy-Schwarz inequality,
\begin{align*}
    & \; \bigl|  \sprod{ (T_{\sigma_2} f)(x)}{h} \bigr|^2 \\
 =  &\;  
         \Bigl| \int_0^{1} \int_{\RR^n}   \sprod{\bracket{z}^{-\kappa}K_1(t, x,z)}{\bracket{z}^{\kappa} K_2(t, x, z) } \,\dz \,\tfrac{\dt}t \Bigr|^2 \\
\le & \; 
         \Bigl(  \int_0^{1} \int_{\RR^n} \bignorm{\bracket{z}^{-\kappa} K_1(t, x,z)}_H^2  \,\dz \,\tfrac{\dt}t\Bigr) 
         \Bigl(  \int_0^{1} \int_{\RR^n} \bignorm{\bracket{z}^{\kappa} K_2(t, x, z)}_H^2  \,\dz \,\tfrac{\dt}t\Bigr).
\end{align*}
Observe that $\bracket{z}^{\kappa} K_2(t, x, z) = \FOURIER\bigl(
(I{-}\Delta_\xi)^{\nicefrac{\kappa}{2}} \phi(|\xi|) \sigma_2(x, \nicefrac{\xi}t)^* h
\bigr)(z)$.  Recall that $\phi \in C_c^\infty(\RR)$ has its support in
$[1, 2]$, so that, for $|\xi| \ge 1$, and using the growth assumption
(\ref{eq:cond2.8}) on derivatives of $\sigma$,
\begin{align*}
& \bigl| \partial_\xi^\beta  \phi(|\xi|) |  \le C \, \bracket{\xi}^{-|\beta|} \le C \\
& \bignorm{ \partial_\xi^\gamma  \sigma_2(x, \nicefrac{\xi}t)^* h } 
 \le C' \, t^{-|\gamma|} \, \norm{h} \, \bigl( |\nicefrac{\xi}t|^{-|\gamma|} \omega(|\nicefrac{\xi}t|^{-\delta}) \bigr) 
 \le C''\norm{h}  \, \omega(t^{+\delta}).
\end{align*}
Note that  we used here  the monotonicity  of $\omega$. The Dini type condition on $\omega$ then gives
\[
 \int_0^{1} \int_{\RR^n} \bignorm{\bracket{z}^{\kappa} K_2(t, x, z)}_H^2  \,\dz \,\tfrac{\dt}t
\le C'' \norm{h}_H^2 \int_0^{1} \int_{1\le |\xi|^2 \le 2} \omega(t^{+\delta})^2\,\dxi  \,\tfrac{\dt}t =: C_2 \norm{h}_H^2.
\]
We conclude by observing that, again by Plancherel,
\begin{align*}
    & \; \int_{\RR^n} \sup_{h\in H, \norm{h}\le 1} \bigl| \sprod{  (T_{\sigma_2} f)(x)}{h} \bigr|^2\,\dx \\
\le & \; C_2 \int_{\RR^n}  \int_0^{1} \int_{\RR^n} \bracket{z}^{-2\kappa}  \bignorm{ K_1(t, x,z)}_H^2 \,\dz \,\tfrac{\dt}t \,\dx\\
=   & \; C_2  \int_0^{1}  \int_{\RR^n} \bracket{z}^{-2\kappa}  \int_{\RR^n}  \bignorm{ K_1(t, x,z)}_H^2 \,\dx  \,\dz \,\tfrac{\dt}t\\
=   & \; C_2'   \int_0^{1}  \int_{\RR^n} \bracket{z}^{-2\kappa}  \int_{\RR^n} |\phi(|t\xi|)|^2 \norm{ \widehat f(\xi) }_H^2 \,\dxi  \,\dz \,\tfrac{\dt}t\\
\le & \; c_n C_2'   \int_{\RR^n}  \norm{ \widehat f(\xi) }_H^2  \int_0^{\infty} |\phi(|t\xi|)|^2 \,\tfrac{\dt}t \,\dxi\\
= & \, c_n C_2'   \int_{\RR^n}  \norm{ \widehat f(\xi) }_H^2\,  \dxi.\\
\end{align*}

\vspace*{-0.7cm}

\noindent  Therefore, $T_{\sigma_2}$ and hence $T_\sigma$ are bounded on
$L_2(\RR^n; H)$.
\end{proof}

\section{Examples}
In this section we discuss some examples and applications of our
results. We shall focus on two simple but relevant linear problems
which involve elliptic operators. Note that following
\cite{ArendtDierLaasriOuhabaz}, we may also consider quasi-linear
evolution equations.  Our maximal regularity results can be used
to improve some results in \cite{ArendtDierLaasriOuhabaz} on existence
of solutions to quasi-linear problems in the sense that we assume less
regularity with respect to $t$ of the coefficients of the equations. We shall
not pursue this direction here.

\subsection{Elliptic operators}
Define on $H = L_2(\RR^d, \dx)$ the sesquilinear forms
\[
   \form{t}[u][v] = \sum_{k,j=1}^d \int_{\RR^d}a_{kj}(t,x)  \partial_k u \overline{\partial_j v} \ \dx \, \text{for}\  u, v \in  W^{1}_2(\Omega).
\]
We assume that $a_{kj}: [0, \tau] \times \RR^d \to \CC$ such that:
\[
    a_{kj} \in L_\infty([0,\tau]\times \RR^d) \, \text{for} \ 1 \le k, j \le d,
\]
and
\[
   \Re \sum_{k,j=1}^d a_{kj}(t,x) \xi_k \overline{\xi_j}  \ge \nu |\xi |^2 \, \text{for all }\  \xi \in \CC^d  \ \text{and a.e. } (t,x) \in [0, \tau]\times \RR^d.
\]
Here $\nu > 0$ is a constant independent of $t$. \\
It is easy to check that $\form{t}$ is $ W^{1}_2(\RR^d)$-bounded and
quasi-coercive. The associated operator with $\form{t}$ is the
elliptic operator given by the formal expression
\[
   A(t) u = - \sum_{k,j=1}^d \partial_j \left( a_{kj}(t,.) \partial_k u \right).
\]
In addition to the above assumptions we assume that for some constant
$M$ and $\alpha > \einhalb$
\begin{equation}\label{aHo}
\lvert a_{kj}(t,x) - a_{kj}(s, x) \rvert \le M \lvert t-s\rvert^\alpha \, \text{for a.e.} \ x \in \RR^d \ \text{and all}\ t, s \in [0, \tau].
\end{equation}
By the Kato square root property, it is known that $\DOMAIN(A(0)^\einhalb) =
W^{1}_2(\RR^d)$, see
\cite{AuscherHofmannLaceyMcIntoshTchamitchian}. Therefore, applying
Corollary~\ref{coro:main} we conclude that for every $f \in
L_2(0,\tau; H)$ the problem
\[
\left\{
  \begin{array}{rcl}
     u'(t) - \sum_{k,j=1}^d \partial_j \left( a_{kj}(t,.) \partial_k u(t) \right) &=& f(t), \ t \in (0, \tau] \\
     u(0)&=&u_0 \in  W^{1}_2(\RR^d) 
  \end{array}
\right.
\]
has a unique solution $u \in W^{1}_2(0,\tau; H) \cap L_2(0,\tau;
W^{1}_2(\RR^d))$.

\subsection{Time-dependent Robin boundary conditions}\label{subsection:example1}
We consider here the Laplacian on a domain $\Omega$ with a time
dependent Robin boundary condition
\begin{equation}\label{rob}
\partial_\nu u(t) + \beta(t,.) u = 0 \text{ on } \Gamma= \partial\Omega,
\end{equation} 
for some function $\beta: [0,\tau] \times \Gamma \to \RR$. This
example is taken from \cite{ArendtDierLaasriOuhabaz}. The difference
here is that we assume less regularity on $\beta$ and also that we can
treat maximal $L_p$--regularity for $p \in (1, \infty) $ whereas the
results in \cite{ArendtDierLaasriOuhabaz} are restricted to the case
$p{=}2$.

Let $\Omega$ be a bounded domain of $\RR^d$ with Lipschitz boundary
$\Gamma = \partial\Omega$ and denote by $\sigma$ the
$(d{-}1)$-dimensional Hausdorff measure on $\Gamma$.  Let $\beta:
[0,\tau] \times \Gamma \to \RR$ be a bounded measurable function which
is H\"older continuous w.r.t.\ the first variable, i.e.,
\begin{equation}\label{lipbeta}
 	\lvert \beta(t,x) - \beta(s, x) \rvert \le M \lvert t-s\rvert^\alpha
\end{equation}
for some constants $M$, $\alpha > \einhalb$ and all $t, s \in[0,
\tau], \ x \in \Gamma$. We consider the symmetric form
\begin{equation}\label{formbeta}
	\form{t}[u][v] = \int_\Omega \nabla u \nabla v\ d x + \int_\Gamma \beta(t, .) u v\ d\sigma, \, u, v \in   W^{1}_2(\Omega).
\end{equation}
The form $\form{t}$ is $ W^{1}_2(\Omega)$-bounded and quasi-coercive.
The first statement follows from the continuity of the trace operator
and the boundedness of $\beta$.  The second one is a consequence of
the inequality
\begin{equation}\label{trace-comp}
\int_\Gamma \lvert u \rvert^2 \ d\sigma \le \varepsilon \norm u_{ W^{1}_2}^2 + c_\varepsilon \norm u_{L_2(\Omega)}^2,
\end{equation}
which is valid for all $\varepsilon > 0$ ($c_\varepsilon$ is a constant
depending on $\varepsilon$).  Note that $\eqref{trace-comp}$ is a
consequence of compactness of the trace as an operator from $
W^{1}_2(\Omega)$ into $L_2(\Gamma, d \sigma)$, see \cite[Chap.\ 2 § 6,
Theorem~6.2]{Necas:book67}.

The operator $A(t)$ associated with $\form{t}[\cdot][\cdot]$ on $H:=
L_2(\Omega)$ is (minus) the Laplacian with time dependent Robin
boundary conditions (\ref{rob}).  As in
\cite{ArendtDierLaasriOuhabaz}, we use the following weak definition
of the normal derivative.  Let $v \in W^{1}_2(\Omega)$ such that
$\Delta v \in L_2(\Omega)$.  Let $h \in L_2(\Gamma, d \sigma)$.  Then
$\partial_\nu v = h$ by definition if $\int_\Omega \nabla v \nabla w +
\int_\Omega \Delta v w = \int_\Gamma h w \, d \sigma$ for all $w \in
W^{1}_2(\Omega)$.  Based on this definition, the domain of $A(t)$ is
the set
\[
    \DOMAIN(A(t)) = \{ v \in   W^{1}_2(\Omega): \Delta v \in L_2(\Omega),  \partial_\nu v + \beta(t) v\vert_\Gamma = 0 \},
\]
and for $v\in \DOMAIN(A(t))$ the operator is given by $A(t)v = - \Delta v$.

Observe that the form $\form{t}$ is symmetric, so that $W^{1}_2(\Omega) = \DOMAIN(A(0)^\einhalb)$.
From Corollary~\ref{coro:main} it follows that the heat equation
\begin{equation*}
\left\{  \begin{aligned}
          u'(t)  - \Delta u(t)  = &\;  f(t) \\
                        u(0)    = &\;  u_0 \qquad u_0 \in   W^{1}_2(\Omega)\\
\partial_\nu u(t) + \beta(t,.) u = &\;  0   \qquad \text{ on } \Gamma
          \end{aligned} \right.
\end{equation*}
has a unique solution $u \in W^{1}_2(0,\tau; L_2(\Omega))$ whenever $f \in
L_2(0,\tau ; L_2(\Omega))$.  This example is also valid for more general
elliptic operators than the Laplacian.
    
\medskip Note that in both examples we have assumed $\alpha$-H\"older
continuity in (\ref{aHo}) and (\ref{lipbeta}). We could replace this
assumption by piecewise $\alpha$-H\"older continuity as authorised by
Corollary~\ref{coro:main}.
    
\medskip{\bf Acknowledgements} Both authors wish to thank the
anonymous referee for his or her thorough reading of the manuscript
and his or her suggestions and improvements. We further wish to thank
Dominik Dier for his useful  remarks  on an earlier version of this paper.

\def\SUBMITTED{Submitted}
\def\TOAPPEAR{To appear in }
\def\PREPARATION{In preparation }

\def\cprime{$'$}
\providecommand{\bysame}{\leavevmode\hbox to3em{\hrulefill}\thinspace}

\end{document}